\documentclass{article}

\usepackage{amsmath,amssymb,amsthm,amsfonts,mathrsfs,indentfirst}
\usepackage[all]{xy}

\def\co{\colon\thinspace}
\DeclareMathAlphabet{\mathsfsl}{OT1}{cmss}{m}{sl}

\newtheorem{thm}{Theorem}[section]
\newtheorem{lem}[thm]{Lemma}
\newtheorem{cor}[thm]{Corollary}
\newtheorem{prop}[thm]{Proposition}
\newtheorem*{thm*}{Theorem}

\theoremstyle{definition}

\newtheorem{rem}[thm]{Remark}

\begin{document}

\title{Two applications of twisted Floer homology}

\author{
{ \Large Yinghua AI$^{\text{\:\rm a}}$ and Yi NI$^{\text{\:\rm
b}}$} }

\date{}

\maketitle

\begin{abstract}
Given an irreducible closed $3$--manifold  $Y$, we show that its
twisted Heegaard Floer homology determines whether $Y$ is a torus
bundle over the circle. Another result we will prove is, if $K$ is
a genus $1$ null-homologous knot in an $L$--space, and the
$0$--surgery on $K$ is fibered, then $K$ itself is fibered. These
two results are the missing cases of earlier results due to the
second author.
\end{abstract}

\footnotetext{\hspace{-15pt}{\it ${\text{\rm a.}}$ School of
Mathematical
Sciences, Peking University, Beijing 100871, P. R. China}\\
{\it ${\text{\rm b.}}$ Department of Mathematics, Massachusetts
Institute of Technology, 77 Massachusetts Avenue, Cambridge, MA}}

\section{Introduction}

Heegaard Floer homology was introduced by Peter Ozsv\'ath and
Zolt\'an Szab\'o in \cite{OSzAnn1}. It associates to every closed
oriented $3$--manifold $Y$ a package of abelian groups
$\widehat{HF}(Y)$, $HF^{+}(Y)$, $HF^{-}(Y)$ and $HF^{\infty}(Y)$.
This theory also provides an invariant, called knot Floer
homology, for every null-homologous knot in a $3$--manifold
\cite{OSzKnot,Ra}.

Heegaard Floer homology turns out to be very powerful. For
example, it determines the Thurston norm of a $3$--manifold and
the genus of a knot \cite{OSzGenus}. Ghiggini \cite{Gh} and Ni
\cite{Ni1} showed that knot Floer homology detects fibered knots.
An analogue of this result for closed manifolds was proved by Ni
\cite{Ni2}, namely, Heegaard Floer homology detects whether a
closed $3$--manifold fibers over the circle when the genus of the
fiber is greater than $1$. The precise statement is as follows.

\begin{thm*}{\rm\cite{Ni2}}
Suppose $Y$ is a closed irreducible $3$--manifold, $F\subset Y$ is
a closed connected surface of genus $g>1$. Let $HF^+(Y,[F],g-1)$
denote the group
\[
    \bigoplus_{ {\bf \mathfrak{s}}\in \mathrm{Spin}^c(Y), \langle c_1({\bf \mathfrak{s}}),[F]\rangle=2g-2}HF^+(Y,{\bf \mathfrak{s}}).
\] If $HF^+(Y,[F],g-1)\cong \mathbb{Z}$, then $Y$ fibers over the circle with $F$ as a fiber.
\end{thm*}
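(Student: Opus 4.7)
The plan is to translate the closed-manifold hypothesis into a statement about the sutured Floer homology of the complement of $F$ in $Y$, then invoke a product-structure theorem characterizing surface bundles by the rank of sutured Floer homology.

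First I would cut $Y$ open along $F$ to produce a balanced sutured manifold $(M,\gamma)$ with $M = Y \setminus (F \times (-1,1))$, whose boundary consists of two parallel copies of $F$ and whose suture $\gamma$ is empty. Because $Y$ is irreducible and $g > 1$, the surface $F$ is Thurston-norm-minimizing in its homology class (one can detect this via Ozsv\'ath--Szab\'o's adjunction inequality), so $(M,\gamma)$ is taut. The summand $HF^+(Y,[F],g-1)$ is precisely the ``extremal'' one: the $\mathrm{Spin}^c$ structures contributing to it are those for which $\langle c_1(\mathfrak{s}),[F]\rangle$ equals the Thurston norm of $[F]$.

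Next I would compare $HF^+(Y,[F],g-1)$ with the top-grading summand of the sutured Floer homology $SFH(M,\gamma)$ of Juh\'asz. Using a Heegaard diagram for $Y$ adapted to $F$, generators in the extremal grading of $SFH(M,\gamma)$ correspond bijectively to the generators in the extremal summand of $\widehat{HF}(Y)$, and a standard spectral-sequence/surgery argument promotes this to $HF^+$. Consequently the hypothesis $HF^+(Y,[F],g-1) \cong \mathbb{Z}$ forces $SFH(M,\gamma) \cong \mathbb{Z}$ in the top $\mathrm{Spin}^c$-grading.

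Finally I would invoke the product-rigidity theorem for sutured manifolds: a balanced, taut, horizontally prime sutured manifold whose $SFH$ has rank one is diffeomorphic to $F \times [0,1]$. This is obtained by combining Gabai's sutured manifold hierarchies with Juh\'asz's surface-decomposition formula, peeling off horizontal decomposing surfaces while preserving the rank-one condition until the sutured manifold is manifestly a product. Reassembling $Y$ from $M$ along the gluing on $\partial M$ then exhibits $Y$ as the mapping torus of a self-diffeomorphism of $F$, so $Y$ fibers over $S^1$ with $F$ as fiber. The main obstacle I expect is the middle step, namely the precise identification of $SFH(M,\gamma)$ with the extremal summand of $HF^+(Y)$: one must match $\mathrm{Spin}^c$-structures and gradings carefully and rule out cancellations that would drop the rank below one. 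A secondary, more combinatorial difficulty lies in the horizontal decomposition: ensuring that every decomposition preserves the rank-one condition in the extremal grading requires the full strength of Juh\'asz's decomposition formula together with a judicious choice of horizontal surface at each stage.
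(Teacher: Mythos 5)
First, a contextual note: the paper does not prove this theorem itself but quotes it from \cite{Ni2}; however, the paper's proof of its genus--$1$ analogue (Theorem~\ref{main theorem}) mirrors the argument of \cite{Ni2} closely, and the paper tells you its skeleton explicitly. That route has two stages. First, one shows via the Euler characteristic that the complement $M$ of $F$ is a \emph{homology} product: the Euler characteristic of $HF^+(Y,\mathfrak{s})$ equals (up to sign) the Turaev torsion, and rank one in the extremal summand forces the torsion computation to output the torsion of a homology product (this is Section~3 of \cite{Ni2}, mirrored here by Proposition~\ref{prop:homological main thm}). Second, one runs Ghiggini's contact--geometric argument: if $M$ is a homology product but not a product, one finds an essential curve on $\partial M$ not cobounding an annulus, produces two sutured--manifold decompositions and hence two taut foliations with distinct relative Euler classes, smooths them to weakly fillable contact structures, and shows the resulting Ozsv\'ath--Szab\'o contact invariants are nonzero and lie in different Spin$^c$ summands, contradicting rank one (this is Lemma~\ref{lem:annulus} in the genus--$1$ case). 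Your proposal takes a genuinely different route, attempting to translate everything into Juh\'asz's sutured Floer homology and then invoke a product--rigidity theorem.

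There are, however, real gaps in the proposal as written. Most concretely, the sutured manifold you write down is not in the category where $SFH$ is defined: cutting $Y$ along the closed surface $F$ produces $M$ with two \emph{closed} boundary components $F_\pm$ and no sutures, and a balanced sutured manifold requires every component of $\partial M$ to carry at least one suture (and $\chi(R_+)=\chi(R_-)$, which also fails when $\gamma=\emptyset$). So $SFH(M,\emptyset)$ is not defined, and the ``middle step'' you flag as the main obstacle is in fact ill--posed rather than merely delicate; you would need to modify the construction (e.g., work with a decorated version, or pass through $0$--surgery on a suitable knot and compare $\widehat{HFK}$ with $SFH$ of a genuine balanced sutured manifold). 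Second, the ``product--rigidity theorem'' you invoke is not an independent black box: the result that a taut, horizontally prime balanced sutured manifold with $SFH\cong\mathbb{Z}$ is a product is itself proved by precisely the Ghiggini/Ni machinery (sutured decompositions yielding taut foliations, approximating by weakly fillable contact structures, nonvanishing and linear independence of contact invariants), so invoking it defers rather than replaces the core difficulty, and one must also verify or remove the horizontally--prime hypothesis before the induction on horizontal surfaces closes up. Repairing these points essentially forces you back onto the argument of \cite{Ni2}.
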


The above theorem does not hold when $g=1$. In this case the group
$HF^+(Y,[F],0)$ is always an infinitely generated group, so its
Euler characteristic is not well-defined in the usual sense.
Ozsv\'ath and Szab\'o suggested us that one may use Heegaard Floer
homology with twisted coefficients in some Novikov ring. Some
calculations for torus bundles has been done in an earlier paper
\cite{AiP} along this line.

As in \cite{OSzAnn2}, there is a twisted Heegaard Floer homology
$\underline{HF}^+(Y;\Lambda_{\omega})$, where
$\Lambda_{\omega}$ is the universal Novikov ring equipped with a  $\mathbb{Z}[
H^1(Y;\mathbb{Z})]$--module  structure which will be defined in
Subsection~\ref{section:twist hf}. In \cite{AiP} this twisted
Heegaard Floer homology is calculated for torus bundles.

\begin{thm}{\rm\cite{AiP}}
\label{thm:hf for torus bundle} Suppose $\pi\co Y\to S^1$ is a
fiber bundle with torus fiber $F$, and $\omega \in
H^2(Y;\mathbb{Z})$ is a cohomology class such that
$\omega([F])\neq0$. Then we have
 an isomorphism of
$\Lambda$--modules
\[
    \underline{HF}^+(Y;\Lambda_{\omega}) \cong
    \Lambda.
\]
\end{thm}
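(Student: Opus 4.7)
The strategy is to compute $\underline{HF}^+(Y;\Lambda_{\omega})$ by invoking the classification of orientation-preserving diffeomorphisms of $T^2$: up to isotopy, the monodromy $\phi$ is an element of $SL(2,\mathbb{Z})$ (well-defined up to conjugation), and I would argue by induction on the number of Dehn twists needed to factor $\phi$ in terms of the standard generators, with base case $Y = T^3$ (trivial monodromy).

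For the base case, I would choose a standard genus-three Heegaard diagram of $T^3$ in which the three generators of $H_2(T^3;\mathbb{Z})$ are represented by explicit periodic domains. Tensoring with $\Lambda_{\omega}$ weights the differential by Novikov monomials $T^{\omega(\mathcal{P})}$ according to these domains. The hypothesis $\omega([F])\neq 0$ guarantees that the weight attached to the periodic domain representing the fiber class is not a unit, which allows one to collapse the (infinitely generated, untwisted) chain complex of $T^3$ and verify directly that $\underline{HF}^+(T^3;\Lambda_{\omega})\cong\Lambda$, supported in the Spin$^c$ structure with $c_1 = 0$.

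For a general monodromy $\phi$, I would factor $\phi = \tau_1 \cdots \tau_k$ into Dehn twists along standard curves on $F$ and induct on $k$. Consecutive bundles $Y_{\tau_1\cdots\tau_j}$ and $Y_{\tau_1\cdots\tau_{j+1}}$ are related by a $\pm 1$ surgery on a curve sitting on a single fiber, so the Ozsv\'ath--Szab\'o twisted surgery exact triangle links their twisted Floer homologies. Because $\omega$ is non-trivial on $F$ and the surgery curve lies in $F$, the class $\omega$ extends across the surgery cobordism in a controlled manner, and one expects the third vertex of the exact triangle (an auxiliary surgery manifold carrying the induced twisting) to have vanishing twisted Floer homology. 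The triangle then forces each inductive step to be an isomorphism, and the base case propagates to arbitrary $\phi$.

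The main obstacle is justifying the vanishing of the third vertex: this requires a careful analysis of how periodic domains and Spin$^c$ structures behave across the surgery cobordism with twisted coefficients, and must be verified uniformly across the elliptic, parabolic, and hyperbolic regimes. A related subtlety is that for some elliptic $\phi$ the manifold $Y_\phi$ is an $L$-space with $b_1>0$, so one must check that the inductive hypothesis is compatible with the Spin$^c$ decomposition and that the factorization of $\phi$ can be chosen consistently in these exceptional cases.
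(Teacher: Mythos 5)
Your plan matches the argument the paper attributes to \cite{AiP}: Theorem~\ref{thm:hf for torus bundle} is cited, not proved, in the present paper, but Corollary~\ref{cor:reglue} is singled out as the regluing-invariance statement ``used implicitly in \cite{AiP} to prove Theorem~\ref{thm:hf for torus bundle},'' and the text records that the proof rests on the $\omega$--twisted surgery exact triangle together with the adjunction inequality. Your scheme --- factor the monodromy into Dehn twists along curves on a fiber, apply the twisted exact triangle at each twist, and reduce to a $T^3$ base case --- is Corollary~\ref{cor:reglue} run one twist at a time, so the overall approach is essentially the one in the cited proof.

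What you flag as your ``main obstacle,'' though, has a clean and uniform resolution that you do not identify, and your worries about elliptic versus parabolic versus hyperbolic monodromy and about $L$--space exceptions are red herrings. With the fiber framing on $K\subset F$, the third vertex of the triangle relating two torus bundles is always $Y_0(K)$, and in $Y_0(K)$ the image of the fiber $F$ becomes a non-separating $2$--sphere $S$ with $\omega_0([S])=\omega([F])\neq 0$ (the dual curve $\gamma$ misses the capping disks). The adjunction inequality kills $\underline{HF}^+(Y_0(K),\mathfrak{s};\Lambda_{\omega_0})$ whenever $\langle c_1(\mathfrak{s}),[S]\rangle\neq 0$, and for the remaining Spin$^c$ structures a free-resolution/universal-coefficients argument in the spirit of Lemma~\ref{lem:hf infinity} finishes the vanishing; this is exactly \cite[Proposition~2.2]{AiP}, and it holds with no dependence whatsoever on the monodromy type, so no case analysis is needed. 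The other piece left dangling in your plan is the base case $\underline{HF}^+(T^3;\Lambda_\omega)\cong\Lambda$, which you only gesture at; since the untwisted $HF^+(T^3)$ in its torsion Spin$^c$ structure is infinitely generated, the collapse to $\Lambda$ after tensoring is a genuine computation --- one must exhibit the weighted periodic domains and verify the twisted differential --- and should be carried out rather than asserted.
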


The above theorem completes in some sense a result of Ozsv\'ath
and Szab\'o \cite{OSzSympl} which states  that a surface bundle
over circle has monic Heegaard Floer homology.

In the current paper, we prove the converse to the above theorem.
Our main result is:

\begin{thm}
\label{main theorem} Suppose $Y$ is a closed irreducible
$3$--manifold, $F\subset Y$ is an embedded torus. If there is a
cohomology class $\omega \in  H^2(Y;\mathbb{Z})$
 such that $\omega([F])\neq0$ and
$\underline{HF}^+(Y;\Lambda_{\omega}) \cong \Lambda$,
then $Y$ fibers over the circle with $F$ as a fiber.
\end{thm}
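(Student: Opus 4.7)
The plan is to cut $Y$ open along $F$ and argue that the resulting manifold is a product, whence $Y$ is a torus bundle with fiber $F$. Since $\omega([F])\neq 0$, the torus $F$ is non-separating, so $M=Y\setminus \nu(F)$ has $\partial M=F_-\sqcup F_+$ consisting of two parallel copies of $F$, and $Y$ is recovered from $M$ by identifying $F_+$ with $F_-$ via some homeomorphism. The theorem is thus equivalent to showing that $M\cong T^2\times[0,1]$ as a cobordism from $F_-$ to $F_+$.

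I would regard $M$ as a balanced sutured manifold with $R_\pm(\gamma)=F_\pm$ and empty sutures, and attempt to apply the product-detection principle of Ghiggini--Ni--Juh\'asz: a taut balanced sutured manifold whose sutured Floer homology has rank one is a product. Two inputs are needed. First, tautness of $(M,\gamma)$, equivalently that $F$ minimizes the Thurston norm in its homology class; this I would try to deduce from an $\omega$-twisted adjunction inequality together with the nonvanishing of $\underline{HF}^+(Y;\Lambda_\omega)$. Second, a correspondence between the hypothesis $\underline{HF}^+(Y;\Lambda_\omega)\cong\Lambda$ and a rank-one statement for an appropriate twisted $SFH(M,\gamma)$. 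Once both are in place, one concludes $M\cong T^2\times[0,1]$ and the theorem follows; the converse direction is exactly Theorem~\ref{thm:hf for torus bundle}.

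The main obstacle is that second correspondence. In Ni's higher-genus arguments, the stratum $HF^+(Y,[F],g-1)$ isolated the single top $\mathrm{Spin}^c$ level picked out by the adjunction inequality, and that stratum mapped directly to the sutured Floer group of the complement. When $F$ is a torus, every $\mathrm{Spin}^c$ structure evaluates trivially on $[F]$, so the analogous stratum degenerates to all of $HF^+(Y)$, which is infinitely generated, and the naive analogue of Ni's extraction procedure breaks down. The role of the $\Lambda_\omega$-coefficients is precisely to tame this: tensoring with the universal Novikov ring along $\omega$ should collapse the infinite direct sum into a finitely generated $\Lambda$-module whose rank agrees with that of an $\omega$-twisted sutured Floer homology of $(M,\gamma)$. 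I would try to set this up through a twisted surgery exact triangle associated to a knot dual to $F$, or via a $\mathbb{Z}[H^1(Y;\mathbb{Z})]$-equivariant spectral sequence comparing $\underline{HF}^+(Y;\Lambda_\omega)$ with the sutured invariants of $M$, and then verify that the product-detection theorem survives the passage to twisted coefficients. Making this step rigorous, rather than the geometric input from Gabai--Juh\'asz, is what I expect to consume the bulk of the work.
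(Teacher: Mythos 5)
Your proposal diverges substantially from the paper's route, and the step you flag as the ``main obstacle'' is a genuine gap that you do not close; as written this is a sketch of a possible strategy rather than a proof.

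The paper does not pass through sutured Floer homology at all. Instead it follows Ghiggini's argument for genus-one fibered knots, adapted to twisted coefficients. The two pillars are: (i) Proposition~\ref{prop:homological main thm}, which uses the identity $\chi(\underline{HF}^+(Y,\mathfrak{s};\Lambda_{\omega})) = \pm T(Y,\mathfrak{s})$ (Proposition~\ref{prop:eulerchar}) to show that the cut-open manifold $M$ is a homology product; and (ii) Lemma~\ref{lem:annulus}, which assumes for contradiction that some essential curve $c \subset F_+$ cobounds no annulus to $F_-$, constructs two taut foliations of $Y$ via Gabai's sutured manifold decompositions along surfaces in $\mathcal{S}_m(+c)$ and $\mathcal{S}_n(-c)$, perturbs them to weakly fillable contact structures $\xi_1,\xi_2$ by Eliashberg--Thurston, and shows using the twisted contact invariant (Corollary~\ref{cor:nonvanishing special} and Proposition~\ref{prop:contact surgery}) that $c^+(-Y,\xi_1;\Lambda_{\omega})$ and $c^+(-Y,\xi_2;\Lambda_{\omega})$ are linearly independent, contradicting $\underline{HF}^+(Y;\Lambda_{\omega}) \cong \Lambda$. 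The proof of Theorem~\ref{main theorem} then produces two annuli crossing in a single arc, whose regular neighborhood is $(T^2-D^2)\times I$, and invokes irreducibility of $Y$ to conclude $M\cong T^2\times I$.

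Your route has two problems beyond the gap you yourself acknowledge. First, the object $(M,\gamma)$ with $R_\pm$ tori and $\gamma=\emptyset$ is not a \emph{balanced} sutured manifold in Juh\'asz's sense --- every boundary component must carry sutures --- so the product-detection theorem has no well-formed input; one would have to place parallel sutures on each boundary torus and then argue the resulting sutured invariant is suitably independent of that choice, which is a nontrivial extra step. Second, and more seriously, there is no established twisted-coefficient version of the Juh\'asz--Ni product-detection theorem, nor a spectral sequence or exact triangle relating $\underline{HF}^+(Y;\Lambda_{\omega})$ to a sutured invariant of $M$ when the cut surface has genus one. You correctly observe that the higher-genus strategy of isolating a top $\mathrm{Spin}^c$-stratum collapses here, but you do not supply a replacement. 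The paper's solution is precisely to sidestep sutured invariants and argue instead through Gabai foliations and twisted contact invariants; without importing something equivalent to Lemma~\ref{lem:annulus}, your outline cannot be completed, and once you do import it, the sutured-Floer detour becomes unnecessary.
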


\begin{rem}
The proof of Theorem~\ref{main theorem} is based on Ghiggini's argument in \cite{Gh}. The only new ingredient we
introduce here is twisted coefficients. In the setting of Monopole
Floer homology, a corresponding version of this theorem was proved
in \cite[Theorem~42.7.1]{KM}, following Ghiggini's argument.
\end{rem}

Besides Theorem~\ref{main theorem}, we give an application of
Theorem~\ref{thm:hf for torus bundle}.

\begin{thm}\label{fiber knot}
Suppose $Y$ is an $L$--space, $K \subset Y$ is
a genus $1$ null-homologous knot. If the $0$--surgery on $K$
fibers over $S^1$, then $K$ itself is a fibered knot.
\end{thm}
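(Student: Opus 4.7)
The plan is to reduce Theorem~\ref{fiber knot} to the Ghiggini--Ni theorem that knot Floer homology detects fibered knots: a null-homologous knot $K$ of Seifert genus $g$ in a closed oriented $3$-manifold $Y$ is fibered whenever $\widehat{HFK}(Y,K,g)\cong \mathbb{Z}$. Thus it suffices to establish $\widehat{HFK}(Y,K,1)\cong \mathbb{Z}$.

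First, since $K$ has genus one, a minimal-genus Seifert surface caps off in $Y_0(K)$ to an embedded torus $F$, which must be the fiber of the hypothesized fibration, so $Y_0(K)$ is a torus bundle. Choosing any $\omega\in H^2(Y_0(K);\mathbb{Z})$ with $\omega([F])\neq 0$, Theorem~\ref{thm:hf for torus bundle} yields
\[
\underline{HF}^+(Y_0(K);\Lambda_\omega)\cong \Lambda.
\]

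To transfer this back to the knot $K$, I would apply the Ozsv\'ath--Szab\'o integer surgery (mapping cone) formula in its twisted version: $\underline{CF}^+(Y_0(K);\Lambda_\omega)$ is identified with the mapping cone of an explicit chain map between twisted modules built from $CFK^\infty(Y,K)$. The adjunction inequality restricts attention to Spin$^c$ structures $\mathfrak{s}$ with $\langle c_1(\mathfrak{s}),[F]\rangle=0$, and the Novikov twist by $\omega$ converts the normally infinitely generated $HF^+$ of those Spin$^c$ structures into a module of finite $\Lambda$-rank. The $L$-space hypothesis on $Y$ pins down $HF^\infty(Y)$ and forces $CFK^\infty(Y,K)$ to be rigidly determined by $\widehat{HFK}(Y,K,\cdot)$ via the Alexander filtration. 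Under these collapses the $\Lambda$-rank of $\underline{HF}^+(Y_0(K);\Lambda_\omega)$ should exactly equal the $\mathbb{Z}$-rank of $\widehat{HFK}(Y,K,1)$. Combined with the isomorphism above, this forces $\widehat{HFK}(Y,K,1)\cong \mathbb{Z}$, and Ghiggini--Ni then gives the fibration of $K$.

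The main obstacle is the middle step: one must set up the twisted mapping cone carefully, carry out the Spin$^c$ decomposition, and invoke the $L$-space hypothesis at exactly the right place to ensure that the $\Lambda$-rank of the resulting module matches the $\mathbb{Z}$-rank of $\widehat{HFK}(Y,K,1)$ with no hidden torsion. This is where the explicit structure of the Novikov module $\Lambda_\omega$ and the rigidity of $CFK^\infty$ for knots in $L$-spaces both enter crucially; once that bookkeeping is in place, the Ghiggini--Ni theorem closes the argument.
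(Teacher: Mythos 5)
Your overall strategy is the same as the paper's: reduce to showing $\widehat{HFK}$ has rank one in the top (equivalently, bottom) Alexander grading, use Theorem~\ref{thm:hf for torus bundle} to know $\underline{HF}^+(Y_0(K);\Lambda_\omega)\cong\Lambda$, and pass through a surgery formula. But the middle step --- which you correctly identify as ``the main obstacle'' --- is left unproved, and the mechanism you suggest for it is not the right one. You claim the $L$-space hypothesis ``forces $CFK^\infty(Y,K)$ to be rigidly determined by $\widehat{HFK}(Y,K,\cdot)$.'' That rigidity statement is a theorem about \emph{$L$-space knots} (knots admitting a positive $L$-space surgery), not about arbitrary knots \emph{in} $L$-spaces, and it is false in the latter generality. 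It cannot be invoked here; indeed, if $CFK^\infty$ were already pinned down in that way there would be little left to prove.

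What the $L$-space hypothesis actually buys, and what the paper uses, is the much weaker fact that $H_*(C(\mathfrak{s})\{i\geq 0\})\cong HF^+(Y,\mathfrak{s})\cong\mathbb{Z}[U,U^{-1}]/U\mathbb{Z}[U]$, which guarantees that the vertical projection $h_*\colon H_*(C(\mathfrak{s})\{\max(i,j)\geq 0\})\to H_*(C(\mathfrak{s})\{i\geq 0\})$ is surjective with a right inverse. The paper then runs the twisted integer surgery exact triangle (Theorem~\ref{thm:integral surgery}) with $m$ large, identifies the other two corners with $A^+_{\mathfrak{s},0}\otimes\Lambda$ and $B^+_{\mathfrak{s}}\otimes\Lambda$ via Theorem~\ref{large surgery}, and writes the connecting map as $F^+=h_*+tv_*+(\text{lower-degree terms})$. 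The key algebraic input you are missing is Lemma~\ref{tensor laurant serie} (together with Lemma~\ref{neglect lower terms}): when $h$ has a right inverse, $h+tv$ is surjective and $\ker(h+tv)\cong\ker(h)\otimes\Lambda$. This, not any rigidity of $CFK^\infty$, is what converts $\underline{HF}^+(Y_0(K),\mathfrak{s}_0;\Lambda_\omega)\cong\ker(F^+)$ into $\ker(h_*)\otimes\Lambda\cong\widehat{HFK}(Y,K,\mathfrak{s},-1)\otimes\Lambda$ and forces the rank-one conclusion. Without some such algebraic lemma your proposal does not close.
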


The case where $K$ has genus greater than $1$ has been proved in
\cite[Corollary 1.4]{Ni1}.

This paper is organized as follows. In Section 2, we collect some
preliminary results on Heegaard Floer homology with an emphasis on
twisted coefficients. In Section 3, we prove a key proposition
which relates the Euler characteristic of $\omega$--twisted
Heegaard Floer homology with Turaev torsion. With the help of this
proposition, we can prove a homological version of the main
theorem. In Section~4, we give an proof of Theorem~\ref{main
theorem} following Ghiggini's argument. In Section 5, we prove
Theorem~\ref{fiber knot} by using Theorem~\ref{thm:hf for torus
bundle} and the exact sequence for $\omega$--twisted Floer
homology from \cite{AiP}.

\vspace{5pt}\noindent\textbf{Acknowledgements.} We are grateful to
Peter Ozsv\'ath and Zolt\'an Szab\'o for suggesting us to use
twisted Heegaard Floer homology for the genus $1$ case. In
particular, we thank Peter for helpful conversations and
encouragements, and for providing the ideas to prove
Proposition~\ref{prop:eulerchar}. The first author would also like
to thank Thomas Peters for many helpful discussions during their
joint work \cite{AiP}.

This work was carried out while the two authors were at Columbia
University. Both authors are grateful to the Columbia math
department for its hospitality.

The first author is supported by China Scholarship Council. The
second author is partially supported by an AIM Five-Year
Fellowship and NSF grant number DMS-0805807.

\section{Preliminaries on Heegaard Floer Homology}
We review some of the constructions in Heegaard Floer homology
which will be used throughout this paper. The details can be found
in \cite{OSzAnn2,OSzKnot,OSzGenus,OSzCont,AiP}.

\subsection{Heegaard Floer homology with twisted coefficients}
\label{section:twist hf}

Let $Y$ be a closed oriented $3$--manifold and $\mathfrak{t}$ be a
Spin$^c$ structure over $Y$. Ozsv\'ath and Szab\'o \cite{OSzAnn2}
defined a universally twisted chain complex
$\underline{CF}^{\circ}(Y,\mathfrak{t})$ with coefficients in the
group ring $\mathbb{Z}[ H^1(Y;\mathbb{Z})]$. Its homology
$\underline{HF}^{\circ}(Y,\mathfrak{t})$ is an invariant of the
pair $(Y,\mathfrak{t})$. Furthermore, given any module $M$ over
$\mathbb{Z}[ H^1(Y;\mathbb{Z})]$, they defined Floer homology with
coefficients twisted by $M$:
\[
\underline{HF}^{\circ}(Y,\mathfrak{t};M)=
H_{\ast}(\underline{CF}^{\circ}(Y,\mathfrak{t})
\otimes_{\mathbb{Z}[ H^1(Y;\mathbb{Z)}]}M)
\]
This construction recovers the ordinary Heegaard Floer homology if
we take $M$ to be the trivial $\mathbb{Z}[
H^1(Y;\mathbb{Z})]$--module $\mathbb{Z}$.

A special twisted Floer homology is used to investigate torus
bundles over the circle  in \cite{AiP}. Consider the universal Novikov ring \cite[Section 11.1]{MD}
  \[
  \Lambda = \left\{\sum_{r \in \mathbb{R}}a_r t^r\bigg|a_r\in\mathbb{R},
  \;\#\{a_r|a_r\ne0, r \leq c\}<\infty\quad \text{\rm for any $c\in\mathbb R$}
  \right\}.
  \]

$\Lambda$ itself is a field. Given a
cohomology class $\omega \in  H^2(Y;\mathbb R) $, there is a group
homomorphism $$
\begin{array}{ccl}
 H^1(Y;\mathbb{Z})&\to &\mathbb R\\
  h  &\mapsto&
\langle h \cup \omega,[Y]\rangle,
\end{array}$$
which then induces a ring homomorphism
$$\begin{array}{ccl}
 \mathbb{Z}[H^1(Y;\mathbb{Z})]&\to &\Lambda\\
  \sum a_{h}\cdot h  &\mapsto&
\sum a_{h}\cdot t^{\langle h \cup \omega,[Y]\rangle}.
\end{array}$$

In this way we can equip $\Lambda$ with an induced
$\mathbb{Z}[ H^1(Y;\mathbb{Z})]$--module structure. We denote this
module by $\Lambda_{\omega}$.

 This
$\mathbb{Z}[ H^1(Y;\mathbb{Z})]$--module $\Lambda_{\omega}$ gives
rise to a  twisted Heegaard Floer homology
$\underline{HF}^+(Y;\Lambda_{\omega})$ which is called the
$\omega$--twisted Heegaard Floer homology. More precisely, it is
defined as follows. (For more details, see \cite{OSzGenus} and
\cite{AiP}.) Choose an admissible  pointed  Heegaard diagram
$(\Sigma,\mbox{\boldmath${\alpha}$}, \mbox{\boldmath${\beta}$},z)$
for $Y$. Every Whitney disk $\phi:\mathbb{D}^2 \to
\mathrm{Sym}^g(\Sigma)$ gives rise to a two-chain in $Y$. Let
$\eta$ be a closed $2$--cochain that represents $\omega$. The
evaluation of $\eta$ on $\phi$ is denoted $\int_{\phi}\eta$. Take
the $\mathbb Z[ H^1(Y;\mathbb Z)]$--module freely generated by all
the pairs $[\mathbf x,i]$ (where $\mathbf x \in \mathbb{T}_\alpha
\cap \mathbb{T}_\beta$ and the integer $i \geq 0$), its  tensor
product with the module $\Lambda_{\omega}$ is the
$\omega$--twisted chain complex $\underline{CF}^+(Y;
\Lambda_{\omega})$. We define the differential on the complex by
the formula:
\[
    \underline{\partial}^+[{\mathbf x},i] = \sum_{{\mathbf y}\in \mathbb{T}_\alpha\cap \mathbb{T}_\beta}\sum_{ \{\phi\in\pi_2({\bf x}, {\bf y})|\mu(\phi)=1\}} \#\widehat{\mathcal{M}}(\phi)[{\bf y},i-n_z(\phi)]\cdot t^{\int_{\phi}\eta}.
\]
The homology of this chain complex only depends on the cohomology
class $\omega$. We call this homology  the $\omega$--twisted
Heegaard Floer homology $\underline{HF}^+(Y;\Lambda_{\omega})$.
This homology also has a $\Lambda$--module structure. Since
$\Lambda$ is a field, a $\Lambda$--module is actually a vector
space over $\Lambda$.

Theorem~\ref{thm:hf for torus bundle} shows that this
$\omega$--twisted Floer homology is very simple for torus bundles
over $S^1$ when $\omega$ evaluates non-trivially on the fiber. The
key ingredients in the proof are an exact sequence for
$\omega$--twisted Heegaard Floer homology and the adjunction
inequality.

\begin{thm}{\rm\cite{AiP}}\label{thm:exact seq}
Let $K \subset Y$ be a framed knot in a $3$--manifold $Y$ and
$\gamma \subset Y-K$ be a simple closed curve in the knot
complement.  For every rational number $r$, $\gamma$ is a curve in
the surgery manifold $Y_r(K)$. Let
$\omega_r=\mathrm{PD}(\gamma)\in
 H^2(Y_r(K);\mathbb{R})$ and $\omega=\mathrm{PD}(\gamma) \in H^2(Y;\mathbb{R})$. Then we have the following exact sequence:
\begin{equation}\label{Seq:01}
\begin{xymatrix}{ \underline{HF}^+(Y;\Lambda_{\omega}) \ar[rr]
&&\underline{HF}^+(Y_0(K);\Lambda_{\omega_0}) \ar[dl]\\
&\underline{HF}^+(Y_1(K);\Lambda_{\omega_1}).\ar[ul]&}
\end{xymatrix}
\end{equation}
The maps in the above sequence are induced from cobordism.
\end{thm}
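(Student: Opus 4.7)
The plan is to adapt Ozsv\'ath--Szab\'o's proof of the surgery exact triangle for $HF^+$ (Theorem~9.19 of \cite{OSzAnn2}) to the $\omega$-twisted setting. The essential geometric input is that $\gamma \subset Y \setminus K$ lies in the knot complement, hence embeds into each of the three manifolds $Y = Y_\infty(K)$, $Y_0(K)$, $Y_1(K)$, and also into each of the $2$-handle cobordisms between consecutive surgeries (which are built by attaching a handle in a neighborhood of $K$, disjoint from $\gamma$). Consequently $\gamma$ defines a coherent homology class on the whole system, and one can select a single $2$-cocycle $\eta$ on the union of the three cobordisms Poincar\'e-dual to $\gamma$ whose restriction to each boundary component represents the appropriate $\omega_r$.

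Concretely, I would start from an admissible Heegaard multi-diagram $(\Sigma, \boldsymbol{\alpha}, \boldsymbol{\beta}_\infty, \boldsymbol{\beta}_0, \boldsymbol{\beta}_1, z)$ subordinate to the framed knot $K$, in which each consecutive pair of attaching sets describes one of the three surgered manifolds. Following \cite{OSzAnn2}, cobordism maps are built by counting pseudo-holomorphic triangles in $\mathrm{Sym}^g(\Sigma)$. In the $\omega$-twisted version, each triangle $\psi$ acquires the additional weight $t^{\int_\psi \eta} \in \Lambda$, producing $\Lambda$-linear chain maps on the $\omega$-twisted complexes $\underline{CF}^+(\,\cdot\,;\Lambda_{\omega_r})$ from Section~\ref{section:twist hf}. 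These are precisely the maps appearing in \eqref{Seq:01}.

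To establish exactness, one verifies (again following \cite{OSzAnn2}) that each consecutive composition is null-homotopic via a chain homotopy given by counting holomorphic quadrilaterals, and that the relevant composition factors through the Floer homology of a manifold in which the intermediate cobordism contains a null-homotopic knot, so that the count collapses. The $\omega$-weights are compatible with these degenerations precisely because $\eta$ is a cocycle: when a quadrilateral breaks along an interior edge into two triangles, the integrals of $\eta$ add and the $\Lambda$-weights therefore multiply, matching the product of the weighted triangle counts. Once the null-homotopy is in place, the exact triangle is extracted via the standard algebraic lemma detecting exact triangles in chain complexes, applied over the field $\Lambda$.

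The main obstacle I anticipate is the bookkeeping: one must check that a Heegaard multi-diagram can be chosen admissibly together with a single cocycle $\eta$ on the associated $4$-manifold restricting correctly to the representatives of $\omega, \omega_0, \omega_1$ on the three boundary components, and that the standard invariance properties of the cobordism maps (independence of Heegaard data, handleslide and stabilization invariance, associativity of triangle counts) continue to hold after multiplication by $t^{\int \eta}$. All of these reduce to the cocycle condition on $\eta$ plus the observation that the weight of a Whitney triangle depends only on its class in the appropriate relative second homology group, so the $\omega$-refinement of the argument is essentially formal once the geometric setup is correctly arranged.
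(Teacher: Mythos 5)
The paper itself does not contain a proof of this theorem: it is stated as a citation to \cite{AiP}, and the proof lives there. The remark immediately after the statement (``The proof of the above theorem in \cite{AiP} can be applied to general integer surgeries'') confirms that the authors are importing the result, not reproving it. So there is no ``paper's own proof'' to compare against, and I can only assess your sketch as a reconstruction of the argument in \cite{AiP}.

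Your outline is a plausible one, and the essential observations are right: $\gamma$ lies in the knot complement, hence defines one coherent class on all three surgered manifolds and on each $2$--handle cobordism between them; holomorphic triangle counts acquire a weight $t^{\int_\psi \eta}$; the fact that $\eta$ is closed and that $\int_\psi \eta$ depends only on the class of $\psi$ in relative $H_2$ makes these weights multiplicative under degenerations of higher polygons; and because the twisted cobordism maps are $\Lambda$--linear, the homological algebra downstream is over the field $\Lambda$.

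Two places deserve caution. First, ``the exact triangle is extracted via the standard algebraic lemma'' compresses the hardest part. The Ozsv\'ath--Szab\'o exact triangle detection lemma requires not merely that consecutive compositions be null-homotopic via quadrilateral counts, but also that a specific chain map assembled from the three maps and their null-homotopies be a quasi-isomorphism; checking this involves counting holomorphic pentagons together with a model computation in a genus-one triple subordinate to the surgery triad. Your sketch does not address whether those higher polygon counts remain coherent after inserting $t^{\int \eta}$; they do, by the same cocycle argument, but a complete write-up must verify it. Second, your sentence that the composition ``factors through the Floer homology of a manifold in which the intermediate cobordism contains a null-homotopic knot, so that the count collapses'' misdescribes the mechanism. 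The null-homotopy of a composition of two triangle maps comes from the associativity ($A_\infty$) relation for polygon counts combined with the fact that the distinguished top-degree cycle $\Theta$ in the Floer homology of the relevant connected sum of $S^1 \times S^2$'s is exact in the appropriate complex; it is not a factorization through a vanishing group. These are exactly the points where the twisted refinement requires explicit checking; apart from them, your outline is sound and almost certainly matches the strategy used in \cite{AiP}.
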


The proof  of the above theorem  in \cite{AiP} can be applied to
general integer surgeries. As in \cite[Theorem~9.19]{OSzAnn2},
suppose $\mathfrak{s}$ is a Spin$^c$ structure on $Y$,
$\mathfrak{s}_k$ is any one of the $p$\; Spin$^c$ structures on
$Y_p(K)$ which are Spin$^c$--cobordant to $\mathfrak{s}$, $Q\co
\mathrm{Spin}^c(Y_0)\to\mathrm{Spin}^c(Y_p)$ is a surjective map.
Let
\[
\underline{HF}^+(Y_0(K),[\mathfrak{s}_k];\Lambda_{\omega_0})=\bigoplus_{\mathfrak{t}
\in
\mathrm{Spin}^c(Y_0(K)),Q(\mathfrak{t})=\mathfrak{s}_k}\underline{HF}^+(Y_0(K),\mathfrak{t};\Lambda_{\omega_0}).
\] The following exact sequence  will
be used in the proof of Theorem \ref{fiber knot}.

\begin{thm}
\label{thm:integral surgery} Notation is as in
Theorem~\ref{thm:exact seq}. For each positive integer $p$, the
following sequence
$$
\hspace{-10pt}\begin{xymatrix}{
\hspace{20pt}\underline{HF}^+(Y,\mathfrak{s};\Lambda_{\omega})
\hspace{20pt}\ar[r]&&\hspace{-60pt}
\underline{HF}^+(Y_0(K),[\mathfrak{s}_k];\Lambda_{\omega_0})
\ar[dl]\\
&\hspace{-10pt}\underline{HF}^+(Y_p(K),\mathfrak{s}_k;\Lambda_{\omega_p})
\ar[ul]_{F^+_3}&}
\end{xymatrix}
$$
is exact. Here the map $F^+_3$ is induced by a two-handle
cobordism connecting $Y_p(K)$ to $Y$.
\end{thm}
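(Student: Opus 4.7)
The strategy is to combine two ingredients already in the literature: the $\omega$--twisted surgery exact triangle of Theorem~\ref{thm:exact seq} from \cite{AiP}, and the $\mathrm{Spin}^c$--refined integer surgery triangle of \cite[Theorem~9.19]{OSzAnn2}. The proof in \cite{AiP} proceeds by counting holomorphic triangles in a triply-periodic Heegaard diagram with each triangle $\Delta$ weighted by $t^{\int_{\Delta}\eta}$, where $\eta$ is a $2$--cocycle representing a class that extends $\omega$ over the pair-of-pants cobordism. This argument is formally parallel to the Ozsv\'ath--Szab\'o proof of the untwisted surgery triangle, with the twisting tracked through the weights, so it should extend directly once one accommodates the passage from $1$--surgery to $p$--surgery.

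First I would set up the triply-periodic Heegaard diagram adapted to $p$--surgery, in which the three systems of attaching curves correspond to the $\infty$--, $p$--, and $0$--framings of $K$, so that the three boundary manifolds of the pair-of-pants cobordism are $Y$, $Y_p(K)$, and $Y_0(K)$. Next I would extend the curve $\gamma \subset Y - K$, viewed as disjoint from the surgery locus, to a properly embedded surface in the total cobordism, yielding a cohomology class $\widetilde{\omega}$ whose restrictions to the three boundary components are exactly $\omega$, $\omega_p$, and $\omega_0$. The cobordism maps defined by the resulting weighted triangle counts then simultaneously respect these three boundary restrictions, giving honest maps of $\Lambda$--modules.

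Second I would refine by $\mathrm{Spin}^c$ structures, following the proof of \cite[Theorem~9.19]{OSzAnn2}. The set of $\mathrm{Spin}^c$ structures on the $2$--handle cobordism from $Y_p(K)$ to $Y$ that extend a fixed $\mathfrak{s} \in \mathrm{Spin}^c(Y)$ consists of $p$ elements; their restrictions to $Y_p(K)$ give exactly the $p$ structures $\mathfrak{s}_k$ that are $\mathrm{Spin}^c$--cobordant to $\mathfrak{s}$, while their restrictions to $Y_0(K)$ form precisely the preimage $Q^{-1}(\mathfrak{s}_k)$. This partition is what induces the refined version of the three maps appearing in the statement.

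The main obstacle is verifying exactness after this refinement. In the pair-of-pants cobordism, the composition of two consecutive triangle maps is chain-homotopic, via a count of holomorphic quadrilaterals, to a sum of contributions indexed by $\mathrm{Spin}^c$ structures on that cobordism; the refined triangle just collects those summands whose restriction to $Y$ equals the chosen $\mathfrak{s}$. Because the weight $t^{\int\widetilde{\omega}}$ depends only on the homology class of the polygon, it is well-defined on each $\mathrm{Spin}^c$--summand, so the chain-homotopy argument of \cite{AiP} applies term-by-term. Combining this with the $\mathrm{Spin}^c$--bookkeeping of \cite[Theorem~9.19]{OSzAnn2} then yields exactness of the refined sequence.
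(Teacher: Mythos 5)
Your proposal matches the paper's intended argument, which the authors leave as a one-line remark: the $\omega$--twisted proof of the $1$--surgery triangle in \cite{AiP} carries over to general integer $p$--surgeries once one imports the ${\rm Spin}^c$--refined bookkeeping of \cite[Theorem~9.19]{OSzAnn2}. Your filling-in of the triply-periodic Heegaard diagram, the extension $\widetilde{\omega}$ over the pair-of-pants cobordism, the homological well-definedness of the $t$--weights on each ${\rm Spin}^c$ summand, and the ${\rm Spin}^c$--refined polygon counts spells out precisely the content behind that remark.
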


Combine Theorem \ref{thm:exact seq} and \cite[Proposition
2.2]{AiP}, we can easily get the following corollary, which is
used  implicitly in \cite{AiP} to prove Theorem \ref{thm:hf for torus bundle}.

\begin{cor}
\label{cor:reglue} Suppose  $F$ is an embedded torus in a closed
manifold $Y$ and $\omega \in  H^2(Y;\mathbb{Z})$ is a cohomology
class such that $\omega([F]) \neq 0$. Let $Y'$ be the manifold
obtained by cutting open $Y$ along $F$ and regluing by a
self-homeomorphism of $F$. There is a cobordism $W\co Y \to Y'$
obtained by adding $2$--handles along knots in $F$. Take a closed
curve $\gamma \subset Y$  missing the attaching regions of the
$2$--handles  and $\mathrm{PD}(\gamma)=\omega \in
H^2(Y;\mathbb{Z})$. As in Theorem~\ref{thm:exact seq}, $\gamma$
also determines a cohomology class $\omega'$ in $Y'$ by setting
$\omega'=\mathrm{PD}_{Y'}(\gamma)$. Then the map induced by
cobordism is an isomorphism
\[
\underline{HF}^+(Y';\Lambda_{\omega'}) \cong
\underline{HF}^+(Y;\Lambda_{\omega}).
\]
\end{cor}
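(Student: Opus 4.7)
The plan is to realize the cobordism $W$ as a composition of elementary $2$--handle cobordisms, each coming from a Dehn twist in a factorization of the regluing homeomorphism, and then to use Theorem~\ref{thm:exact seq} to show that each elementary cobordism induces an isomorphism on $\omega$--twisted Floer homology.

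Since the mapping class group of $T^2$ is $\mathrm{SL}(2,\mathbb{Z})$, which is generated by two Dehn twists, the regluing of $F$ is isotopic to a product $\tau_{c_1}^{\epsilon_1}\circ\cdots\circ\tau_{c_n}^{\epsilon_n}$ of Dehn twists along non-separating simple closed curves $c_i\subset F$ with $\epsilon_i=\pm 1$. In a collar $F\times[-1,1]$, the Dehn twist regluing $\tau_{c_i}^{\epsilon_i}$ is realized by $(-\epsilon_i)$--surgery on a push-off $K_i$ of $c_i$, taken with the framing induced by $F$. Thus $W$ decomposes as a composition of the $n$ trace cobordisms of these $\pm 1$--surgeries; moreover, because $\gamma$ misses the $2$--handle attaching regions by hypothesis, we may arrange $\gamma$ to be disjoint from each $K_i$ and from a small annular neighborhood of each $c_i$ in $F$, while preserving $\mathrm{PD}(\gamma)=\omega$.

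For each $i$, Theorem~\ref{thm:exact seq} produces an exact triangle whose three vertices are the $\omega$--twisted Floer homologies of the intermediate manifolds $Y_{i-1}$ and $Y_i$ together with that of the $0$--surgery $(Y_{i-1})_0(K_i)$, with the cobordism map $\underline{HF}^+(Y_{i-1};\Lambda_{\omega_{i-1}})\to\underline{HF}^+(Y_i;\Lambda_{\omega_i})$ forming one of the edges. It suffices to show that the $0$--surgery vertex vanishes. In $(Y_{i-1})_0(K_i)$, the $0$--surgery (relative to the $F$--framing) introduces a disk bounded by a curve parallel to $c_i$ on $F$, and compressing $F$ along this disk yields an embedded $2$--sphere $S$. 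Because $\gamma$ was arranged disjoint from the neighborhood of $c_i$ on $F$ and from the surgery region, the intersection number $\gamma\cdot S$ equals $\gamma\cdot F=\langle\omega,[F]\rangle\neq 0$, so $S$ is a non-separating sphere on which the twisting class pairs non-trivially. By \cite[Proposition~2.2]{AiP}, this forces $\underline{HF}^+((Y_{i-1})_0(K_i);\Lambda_{(\omega_{i-1})_0})=0$.

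The main obstacle, I expect, is careful bookkeeping: confirming that the framing appearing in the surgery description of a Dehn twist agrees with the $0$--framing referenced by Theorem~\ref{thm:exact seq}, and verifying that a single curve $\gamma$ representing $\omega$ can be chosen to satisfy the disjointness and intersection conditions simultaneously at every stage of the surgery sequence. Once these points are in place, each exact triangle collapses to an isomorphism on the two non-zero vertices, and composing the $n$ resulting isomorphisms gives the desired identification $\underline{HF}^+(Y';\Lambda_{\omega'})\cong\underline{HF}^+(Y;\Lambda_\omega)$.
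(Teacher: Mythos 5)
Your proposal is correct and fills in exactly what the paper leaves implicit when it says the corollary follows by "combining Theorem~\ref{thm:exact seq} and \cite[Proposition~2.2]{AiP}": factor the regluing as a product of Dehn twists along curves in $F$, realize each as a $\pm1$--surgery with respect to the $F$--framing, feed each into the surgery triangle, and observe that the third vertex (the $F$--framed surgery) contains a non-separating $2$--sphere $S$ homologous to $F$ with $\omega([S])=\omega([F])\neq 0$, so its twisted Floer homology vanishes by \cite[Proposition~2.2]{AiP}. The "bookkeeping" worry you raise about framings is real but mild; one clean way to sidestep the sign ambiguity in which edge of the triangle is the cobordism map $Y_{i-1}\to Y_i$ is to use that the mapping class group of $T^2$ is generated by \emph{positive} Dehn twists, so every elementary cobordism may be taken to be a $(-1)$--surgery, which is in fact what the paper does later in the proof of Lemma~\ref{lem:annulus}.
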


\subsection{Knot Floer homology}

Suppose $K \subset Y$ is a null-homologous knot in a
rational homology $3$--sphere, $F$ is a fixed Seifert surface. There is  a compatible doubly pointed Heegaard diagram
$(\Sigma,\mbox{\boldmath$\alpha$},\mbox{\boldmath$\beta$},w,z)$ for the knot $K$ as in \cite{OSzKnot}. This gives rise to  a map from intersection points
between the two tori $\mathbb{T}_{\alpha},\mathbb{T}_{\beta}$ to
relative Spin$^c$ structures on $Y-K$
\[
s_{w,z}\co\mathbb{T}_{\alpha} \cap \mathbb{T}_{\beta} \to
\underline{\mathrm{Spin}^c}(Y,K)=\mathrm{Spin}^c(Y_0(K)).
\]

For each Spin$^c$ structure $\mathfrak{s}$ on $Y$, the knot chain
complex $$C(\mathfrak{s})=CFK^{\infty}(Y,K;\mathfrak{s})$$ is a
free abelian group generated by $[\mathbf x,i,j] \in
(\mathbb{T}_{\alpha} \cap \mathbb{T}_{\beta}) \times \mathbb{Z}
\times \mathbb{Z}$, such that $s_{w,z}(\mathbf x)$ extends
$\mathfrak{s}$ and
\[
\frac{\langle c_1(s_{w,z}(\mathbf x)),[\widehat{F}]
\rangle}{2}+(i-j)=0,
\]
and endowed  with the differential
\[
\partial [\mathbf x,i,j]=\sum_{\mathbf y \in \mathbb{T}_{\alpha} \cap \mathbb{T}_{\beta}}
\sum_{\{\phi \in \pi_2(\mathbf x,\mathbf y)|\mu(\phi)=1\}} \#
\widehat{\mathcal{M}}(\phi)[{\bf y},i-n_w(\phi),j-n_z(\phi)].
\]
This complex is given a filtration function $\mathcal{F}[{\bf
x},i,j]=(i,j)$. The forgetful map $[{\bf x},i,j] \mapsto [{\bf
x},i]$ induces an isomorphism between $C(\mathfrak{s})$ and
$CF^{\infty}(Y,\mathfrak{s})$, sending
$B^+_{\mathfrak{s}}:=C(\mathfrak{s}){\{i \geq 0\}}$ isomorphically
to $CF^+(Y,\mathfrak{s})$. For each integer $d$, we define
\[
\widehat{HFK}(Y,K,\mathfrak{s};d)=
H_{\ast}\big(C(\mathfrak{s}){\{i=0,j \le
d\}}/C(\mathfrak{s}){\{i=0,j \le d-1\}},\partial\big).
\]

Define $A_{\mathfrak{s},k}^+=C(\mathfrak{s}){\{\text{max}(i,j-k)
\geq 0\}}$. There are two canonical chain maps
$v_{\mathfrak{s},k}^+\co A_{\mathfrak{s},k}^+ \to
B_{\mathfrak{s}}^+$  and $h_{\mathfrak{s},k}^+\co
A_{\mathfrak{s},k}^+ \to B_{\mathfrak{s}}^+$. The map
$v_{\mathfrak{s},k}^+$ is the projection onto $C(\mathfrak{s}){\{i
\geq 0\}}$, while $h_{\mathfrak{s},k}^+$ is the projection onto
$C(\mathfrak{s}){\{j \geq k\}}$, followed by the identification
with $C(\mathfrak{s}){\{j \geq 0 \}}$ induced by the
multiplication by $U^k$, followed by the chain homotopy
equivalence from $C(\mathfrak{s}){\{j \geq 0\}}$ to
$C(\mathfrak{s}){\{i \geq 0\}}$.

\begin{thm}[Ozsv\'ath--Szab\'o,\cite{OSzFour,OSzIntSurg}]
\label{large surgery} Let $K \subset Y$ be a null-homologous knot
in a rational homology sphere. There is an integer $N$ with the
property that for all $m \geq N$ and all $t \in
\mathbb{Z}/{m\mathbb{Z}}$, $CF^+(Y_m(K),\mathfrak{s}_t)$ is
represented by the chain complex
$A_{\mathfrak{s},k}^+=C(\mathfrak{s}){\{\text{max}(i,j-k) \geq
0\}}$ where $k \equiv t (\text{mod } m)$ and $|k| \leq
\frac{m}{2}$, in the sense that there are isomorphisms
\[
\Psi_{m,k}^+\co CF^+(Y_m(K),\mathfrak{s}_t) \to
A_{\mathfrak{s},k}^+.
\]
Moreover, if $\mathfrak{x}_k$ and $\mathfrak{y}_k$ denote the {\rm
Spin}$^c$ structures over the cobordism
$$W'_m(K)\co Y_m(K) \to Y$$
with
\[
\langle c_1(\mathfrak{x}_k),[\widehat{F}] \rangle +m=2k  \text{
and } \langle c_1(\mathfrak{y}_k),[\widehat{F}] \rangle -m=2k,
\]
respectively. Then $v_k^+$ and $h_k^+$ correspond to the maps
induced by the cobordism $W'_m(K)$ endowed with the Spin$^c$
structures  $\mathfrak{x}_k$ and $\mathfrak{y}_k$, respectively.
\end{thm}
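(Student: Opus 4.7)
The plan is to realize the surgered complex $CF^+(Y_m(K),\mathfrak{s}_t)$ geometrically from a Heegaard diagram tailored to the knot $K$, and then identify the two natural projections with the cobordism maps. First I would take a doubly pointed Heegaard diagram $(\Sigma,\boldsymbol{\alpha},\boldsymbol{\beta},w,z)$ compatible with $K \subset Y$, arranged so that the basepoints $w$ and $z$ lie on either side of a distinguished curve $\beta_g$ which is a meridian of $K$. Replacing $\beta_g$ by a longitude $\lambda_m$ of framing $m$ gives a Heegaard diagram for $Y_m(K)$. For $m$ large, $\lambda_m$ winds many times around the meridian near the basepoints, and its intersection with the corresponding $\alpha$-curve introduces exactly $m$ new intersection points per generator ${\bf x} \in \mathbb{T}_\alpha \cap \mathbb{T}_\beta$; these are naturally parameterized by an integer $k \in \mathbb{Z}/m\mathbb{Z}$, giving a bijection between generators of $CF^+(Y_m(K))$ and triples $[{\bf x},i,j]$ satisfying the relation in the definition of $C(\mathfrak{s})$.

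Next I would track relative Spin$^c$ structures: the map $s_{w,z}$ assigns each generator a relative Spin$^c$ structure on the knot complement, and these reduce to Spin$^c$ structures on $Y_m(K)$. Sorting by the induced $\mathfrak{s}_t$ and restricting to $|k|\le m/2$ cuts out precisely the generators of $A^+_{\mathfrak{s},k}$. The heart of the argument is to show that, for $m$ sufficiently large, every holomorphic disk contributing to the differential on $CF^+(Y_m(K),\mathfrak{s}_t)$ corresponds to a disk in the original doubly pointed diagram whose $n_w$ and $n_z$ multiplicities match the filtration shifts defining $\partial$ on $A^+_{\mathfrak{s},k}$. This is the technical core: one uses a winding/energy argument to rule out disks that cross the surgery region a nontrivial number of times, since those would have large area relative to the geometry coming from the $m$-framed longitude, while bounded Maslov index forces bounded area.

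Finally, to identify $v_k^+$ and $h_k^+$ with cobordism maps, I would work on a triple Heegaard diagram $(\Sigma,\boldsymbol{\alpha},\boldsymbol{\beta},\boldsymbol{\gamma})$ where $\boldsymbol{\gamma}$ differs from $\boldsymbol{\beta}$ by replacing the surgery curve with the meridian, so that the corresponding four-manifold is the two-handle cobordism $W'_m(K)\co Y_m(K) \to Y$. Counting holomorphic triangles gives the cobordism map, and each triangle carries a canonical Spin$^c$ structure determined by the evaluation of $c_1$ against the capped-off Seifert surface $[\widehat{F}]$. The two nearest triangles at the ``small'' intersection point of $\lambda_m \cap \mu$ yield the Spin$^c$ structures $\mathfrak{x}_k$ and $\mathfrak{y}_k$; the first produces a holomorphic triangle with $n_z=0$ (corresponding to projection onto $\{i\ge 0\}$, i.e.\ $v_k^+$), and the second a triangle with $n_w=0$ which under the reflection symmetry interchanging $w$ and $z$ corresponds to projecting onto $\{j\ge k\}$ and then applying $U^k$, i.e.\ $h_k^+$.

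The main obstacle is the analytic step of showing that for $m$ large the differential on $CF^+(Y_m(K),\mathfrak{s}_t)$ is exactly the restriction of the knot Floer differential to $A^+_{\mathfrak{s},k}$; all the combinatorics of Spin$^c$ bookkeeping and the identification of $v_k^+, h_k^+$ follow more or less formally once this is in place. A secondary subtlety is verifying admissibility of the modified diagrams and ensuring the cutoff $|k|\le m/2$ is consistent with the choice of canonical Spin$^c$ representative on the cobordism.
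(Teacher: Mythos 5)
This theorem is not proved in the paper; it is stated as an attributed result of Ozsv\'ath and Szab\'o and cited from \cite{OSzFour,OSzIntSurg}, so there is no ``paper proof'' to compare against. Your sketch follows the standard Ozsv\'ath--Szab\'o architecture: a doubly pointed diagram with the meridian replaced by a large-framing longitude in a winding region, identification of generators in the relevant Spin$^c$ class with the $A^+_{\mathfrak{s},k}$ generators, an argument that for $m\gg 0$ the Maslov-index-one domains stay in the winding region, and a triple-diagram holomorphic-triangle argument identifying $v_k^+$ and $h_k^+$ with the cobordism maps for $\mathfrak{x}_k$ and $\mathfrak{y}_k$. That is the right outline.

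One step is stated a bit loosely: ``bounded Maslov index forces bounded area'' is not quite the mechanism. The Maslov index alone does not bound area; rather, one uses weak admissibility of the diagram in the fixed Spin$^c$ structure to see that only finitely many homotopy classes of Maslov-index-one disks have nonnegative multiplicities, and one combines this with the combinatorics of periodic domains in the winding region to show that, for $m$ large, the contributing domains are exactly the ones seen in the knot complex. Also, in your description of $h_k^+$, the passage from $\{j\ge k\}$ to $\{i\ge0\}$ is via the chain homotopy equivalence between $C(\mathfrak{s})\{j\ge 0\}$ and $C(\mathfrak{s})\{i\ge 0\}$ (the ``flip map''), not literally a reflection symmetry of the diagram; the triangle with $n_w=0$ computes the cobordism map in the $z$-pointed theory, which is then matched to the $w$-pointed theory by that homotopy equivalence. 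With those two points tightened, your sketch is a faithful summary of the Ozsv\'ath--Szab\'o argument.
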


\subsection{Ozsv\'ath--Szab\'o contact invariant in twisted Floer homology}

Ozsv\'ath and Szab\'o \cite{OSzCont} defined an invariant $c(\xi)
\in \widehat{HF}(-Y)$ for every contact structure $\xi$ on a
closed $3$--manifold $Y$. It is defined up to sign and lies in the
summand $\widehat{HF}(-Y,\mathfrak{t}_{\xi})$ corresponding to the
canonical Spin$^c$ structure $\mathfrak{t}_{\xi}$  associated to
$\xi$.

In \cite{OSzGenus}, Ozsv\'ath and Szab\'o point out that the
contact invariant can be defined for twisted Heegaard Floer
homology. In fact, for any module $M$ over $\mathbb{Z}[
H^1(Y;\mathbb{Z})]$ one can get an element
\[
c(\xi;M) \in
\underline{\widehat{HF}}(-Y,\mathfrak{t}_{\xi};M)/\mathbb{Z}[
H^1(Y,\mathbb{Z})]^{\times}.
\]
This is an element $c(\xi;M) \in
\underline{\widehat{HF}}(-Y,\mathfrak{t}_{\xi};M)$ which is
well-defined up to an overall multiplication by a unit in the
group ring $\mathbb{Z}[ H^1(Y;\mathbb{Z})]$. Let $c^+(\xi;M)$
denote the image of $c(\xi;M)$ under the natural map $
\underline{\widehat{HF}}(-Y,\mathfrak{t}_{\xi};M) \to
\underline{HF}^+(-Y,\mathfrak{t}_{\xi};M)$.

In particular, for $\omega$--twisted  Heegaard Floer homology
$\underline{HF}^+(Y;\Lambda_{\omega})$, one gets a contact
invariant $c^+(\xi;\Lambda_{\omega}) \in
\underline{HF}^+(-Y,\mathfrak{t}_{\xi};\Lambda_{\omega}) $. It is
well-defined up to multiplication by  a term $\pm t^n$ for some $n
\in \mathbb{R}$. There is also a non-vanishing theorem for weakly
 fillable contact structures in this $\omega$--twisted
version.

\begin{thm}{\rm\cite[Theorem~4.2]{OSzGenus}}
\label{thm:non-vanishing} Let $(W,\Omega)$ be a weak filling
of a contact manifold $(Y,\xi)$, where $\Omega \in H^2(W;\mathbb{R})$ is the symplectic $2$--form. Then the contact invariant
$c^+(\xi;\Lambda_{\Omega|_Y})$ is non-trivial.
\end{thm}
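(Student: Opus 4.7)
The plan is to adapt Ozsv\'ath--Szab\'o's proof of non-vanishing of the contact invariant for strongly fillable contact structures to the weakly fillable setting by means of $\omega$--twisted coefficients with $\omega=\Omega|_Y$. The point of the twist is that it records the symplectic areas of $2$--cycles in $Y$ against $\Omega$, and this is precisely the extra piece of data needed to rescue the strong-filling argument when the filling is only weak.

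First I would appeal to the Eliashberg--Etnyre symplectic cap theorem, which applies equally well to weak fillings, to embed $(W,\Omega)$ symplectically into a closed symplectic $4$--manifold $(X,\Omega')$ with $b_2^+(X)\geq 2$ and $\Omega'|_W=\Omega$. Write $X=W\cup_Y W_2$, where the cap $W_2$ is a symplectic cobordism from $Y$ to $\emptyset$. Equip $X$ with an almost complex structure $J$ tamed by $\Omega'$, and let $\mathfrak{k}$ denote the canonical Spin$^c$ structure of $J$.

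Next I would compute the $\Omega'$--twisted mixed Ozsv\'ath--Szab\'o invariant $\underline{\Phi}_{X,\mathfrak{k};\Omega'}$ and show that it is non-zero in $\Lambda$. This is the heart of the argument and the main obstacle: it requires upgrading the Taubes-type computation ``symplectic implies non-trivial Seiberg--Witten invariant'' to the $\omega$--twisted Ozsv\'ath--Szab\'o setting. The key observation is that in the sum defining $\underline{\Phi}_{X,\mathfrak{k};\Omega'}$, contributions from distinct Spin$^c$ structures are weighted by distinct monomials $t^{\langle\Omega',\cdot\rangle}$ in $\Lambda$, so no cancellation can take place, and $\Lambda$ being a field eliminates any residual torsion subtlety. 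The non-zero contribution from $\mathfrak{k}$ itself then survives as the leading term.

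Finally, I would invoke the standard factorisation of mixed invariants through contact invariants: cutting $X$ along $Y$ expresses $\underline{\Phi}_{X,\mathfrak{k};\Omega'}$ as a composition of the cobordism maps induced by $W$ and $W_2$ (with appropriate Spin$^c$ restrictions) applied to, and read off against, the contact invariant $c^+(\xi;\Lambda_{\Omega|_Y})$. Since the twisting on $S^3$ is trivial (as $H^1(S^3)=0$) and the unique generator in the relevant grading is detected by the filling cobordism from the standard contact $S^3$, the non-vanishing of $\underline{\Phi}_{X,\mathfrak{k};\Omega'}$ in $\Lambda$ forces $c^+(\xi;\Lambda_{\Omega|_Y})\neq 0$, completing the proof.
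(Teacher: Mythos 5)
The paper does not actually prove this theorem: it is quoted from \cite[Theorem~4.2]{OSzGenus}, and the Remark immediately following addresses only the change of coefficient ring from the twisting used there to $\Lambda_\omega$. The substance of that Remark is a description of $c^+(\xi;\Lambda_\omega)$ as the image of a generator of $\underline{HF}^+(-Y_0(K),g-1;\Lambda_{\beta|_{-Y_0(K)}})\cong\Lambda$ under the Giroux $2$--handle cobordism map $\underline{F}^+_{W;[\beta]}$, after which the argument of \cite{OSzGenus} is asserted to carry over.

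Your outline reproduces the same high-level strategy as the cited source: cap the weak filling via Eliashberg--Etnyre, establish non-vanishing of a twisted closed four-manifold invariant, then factor it through the contact element. However, the central step has a genuine gap. The non-vanishing of $\underline{\Phi}_{X,\mathfrak{k};\Omega'}$ is itself a theorem in its own right, the twisted analogue of the untwisted non-vanishing of \cite{OSzSympl}, proved in \cite{OSzGenus} using Donaldson's Lefschetz pencils. Your ``distinct monomials, so no cancellation'' reasoning does not reconstruct it: the Spin$^c$ structures appearing in the composition-formula sum need not pair distinctly with $[\Omega']$ --- shifting a Spin$^c$ structure by the Poincar\'e dual of a class on which $[\Omega']$ vanishes changes the summand but not the exponent of $t$ --- so cancellation is a priori possible, and ruling it out is precisely the content of the theorem you would need to invoke rather than re-derive by a heuristic. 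Your final factorization step is also imprecise: the decomposition relevant to the contact invariant goes through the $0$--surgery on the binding of a compatible open book (the Giroux cobordism), which is exactly the technical ingredient supplied by the paper's Remark and absent from your sketch, and the appeal to $S^3$ does not match the structure of the actual argument.
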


\begin{rem}
In \cite{OSzGenus} Oszv\'ath and Szab\'o proved that if
$(W,\Omega)$ is  a symplectic filling of a contact structure
$(Y,\xi)$, then the contact invariant $c^+(\xi;[\Omega|_Y])$ is
non-trivial. That proof works for our  $\omega$--twisted version
as well given the following description of the $\omega$--twisted
contact invariant.

Take an open book decomposition $(Y,K)$ compatible with the
contact structure $\xi$. After positive stabilization, we can
assume that the open book has connected binding and genus $g>1$.
Suppose $W\co Y \to Y_0(K)$ is the corresponding Giroux
$2$--handle cobordism \cite{Gir}, and $\beta \in  H^2(W;\mathbb
R)$  is any cohomology class on $W$ which extends
 $\omega$. Then we have
 $$\underline{HF}^+(-Y_0(K),g-1;\Lambda_{\beta|_{-Y_0(K)}}) \cong \Lambda.$$
Moreover, the contact invariant $c^+(\xi;\Lambda_{\omega})$
is equal to the image of $1 \in \Lambda$ under the map
induced by cobordism
$$\underline{F}^+_{W;[\beta]}\co\underline{HF}^+(-Y_0(K),g-1;\Lambda_{\beta|_{-Y_0(K)}}) \to \underline{HF}^+(-Y,\mathfrak{t}_{\xi};\Lambda_{\omega}).$$

In the untwisted case, such description for the contact invariant
is proved in \cite[Proposition~3.1]{OSzCont}. In that proof, one
constructed a Heegaard diagram for $Y_0(K)$  which is admissible
with respect to all the Spin$^c$ structures $\mathfrak{t}$ such that
$$\langle c_1(\mathfrak{t}), [\widehat{F}] \rangle =2g-2.$$
In this diagram there are only $2$ intersection points
representing Spin$^c$ structures satisfying the above restriction.
Notice we can also use this Heegaard diagram to compute the
$\omega$--twisted Heegaard Floer homology.
 So this argument can be used to prove the above statements.
\end{rem}

The following corollary is a little modification of \cite[Lemma
2.3]{Gh}.

\begin{cor}
\label{cor:nonvanishing special} Suppose $Y$ is a closed,
connected, oriented $3$--manifold with $b_1(Y)=1$, and $F$ is a
closed surface in $Y$. Suppose $\xi$ is a weakly fillable contact
structure on $Y$, such that $\xi|_F$ is $C^0$--close to the
oriented tangent plane field of $F$, and $\omega \in
H^2(Y;\mathbb{R})$ is a cohomology class such that
$\omega([F])>0$. Then the $\omega$--twisted contact invariant
$c^+(\xi;\Lambda_{\omega})$ is non-zero.
\end{cor}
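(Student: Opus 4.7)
The strategy is to reduce the corollary to Theorem~\ref{thm:non-vanishing} by exploiting $b_1(Y)=1$ to determine $\omega$ up to a positive scalar, then transporting the non-vanishing via rescaling of the Novikov parameter.

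First I would use the weak fillability hypothesis to choose a weak symplectic filling $(W,\Omega)$ of $(Y,\xi)$. By definition $\Omega|_\xi>0$ pointwise on $Y$; since $\xi|_F$ is $C^0$-close to the oriented tangent plane field of $F$, the $2$-form $\Omega|_Y$ restricts positively to the oriented tangent plane of $F$ at every point, and therefore $\langle [\Omega|_Y],[F]\rangle>0$. The hypothesis $b_1(Y)=1$ together with Poincar\'e duality forces $\dim_{\mathbb{R}}H^2(Y;\mathbb{R})=1$, so the two classes $\omega$ and $[\Omega|_Y]$, both pairing positively with $[F]$, must lie on the same positive ray: $[\Omega|_Y]=c\,\omega$ for some $c>0$.

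Next I would identify the $\omega$-twisted and $[\Omega|_Y]$-twisted Heegaard Floer homologies via rescaling of the Novikov parameter. The map $\sigma_c\co\Lambda\to\Lambda$, $\sum a_r t^r\mapsto\sum a_r t^{cr}$, is a ring automorphism of $\Lambda$, and one directly checks that it is an isomorphism of $\mathbb{Z}[H^1(Y;\mathbb{Z})]$-modules $\Lambda_\omega\cong\Lambda_{c\omega}$, because the two module structures differ exactly by scaling the exponent of $t$ by $c$. Applied term-by-term to the complex $\underline{CF}^+(-Y;\Lambda_\omega)$ from Subsection~\ref{section:twist hf}, $\sigma_c$ gives a chain isomorphism onto $\underline{CF}^+(-Y;\Lambda_{c\omega})$: replacing a cocycle $\eta$ representing $\omega$ by $c\eta$ turns $t^{\int_\phi\eta}$ into $t^{c\int_\phi\eta}$, which matches $\sigma_c$ applied to the coefficient.

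Finally I would apply Theorem~\ref{thm:non-vanishing} to $(W,\Omega)$ to conclude $c^+(\xi;\Lambda_{\Omega|_Y})\ne 0$, and transport this non-vanishing back through $\sigma_c$ to obtain $c^+(\xi;\Lambda_\omega)\ne 0$. The only delicate point is verifying that $\sigma_c$ sends the $\omega$-twisted contact cycle to the $[\Omega|_Y]$-twisted one; this is transparent from the description of the twisted contact invariant recalled in the remark preceding the corollary, where it is realized as the image of $1\in\Lambda$ under a cobordism map whose dependence on the cohomology class enters only through the exponents of $t$, which $\sigma_c$ rescales uniformly.
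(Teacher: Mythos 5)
Your proof is correct and follows essentially the same route as the paper: choose a weak filling $(W,\Omega)$, use $b_1(Y)=1$ to write $[\Omega|_Y]=c\omega$ with $c>0$, identify $\underline{CF}^+(-Y;\Lambda_\omega)$ with $\underline{CF}^+(-Y;\Lambda_{\Omega|_Y})$ via the Novikov rescaling $t\mapsto t^c$, and then invoke Theorem~\ref{thm:non-vanishing}. If anything you are slightly more careful than the paper in spelling out why $\langle[\Omega|_Y],[F]\rangle>0$ requires the $C^0$-closeness of $\xi|_F$ to the tangent plane field of $F$.
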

\begin{proof}
Let $(W,\Omega)$ be a weak filling of the contact structure
$(Y,\xi)$, $[\Omega]\in H^2(W)$ be the cohomology class
represented by the closed $2$--form $\Omega$. By the definition of
weak fillability, $\Omega$ is positive on each plane in $\xi$, so
$\langle[\Omega]|_Y,[F]\rangle>0$. Since $b_1(Y)=1$ and
$\omega([F])>0$, we may assume $[\Omega]|_Y=k \omega \in
 H^2(Y;\mathbb{R})$ for some positive real number $k$. By
Theorem~\ref{thm:non-vanishing}, $c^+(\xi;\Lambda_{\Omega})
\neq 0$. Notice that the map $t \mapsto t^k$ induces an
isomorphism between chain complexes
$$\underline{CF}^+(-Y;\Lambda_{\omega}) \to
\underline{CF}^+(-Y;\Lambda_{\Omega}),$$ it thus induces an
isomorphism between  homology groups
$$\underline{HF}^+(-Y;\Lambda_{\omega}) \to
\underline{HF}^+(-Y;\Lambda_{\Omega}).$$  Under this
isomorphism $c^+(\xi;\Lambda_{\omega})$ is taken to
$c^+(\xi;\Lambda_{\Omega})$. This forces the contact
invariant $c^+(\xi;\Lambda_{\omega})$ to be non-zero.
\end{proof}

Given a contact manifold $(Y,\xi)$ and a Legendrian knot $K\subset
Y$, we can do contact $(+1)$--surgery to produce a new contact
manifold $(Y_1(K),\xi')$. In \cite{OSzCont,LS}, it is showed that
the untwisted contact invariant behaves well with respect to
contact $(+1)$--surgery.  Similarly, the $\omega$--twisted contact
invariants are related as follows.

\begin{prop}{\rm\cite[Theorem 2.3]{LS}}
\label{prop:contact surgery} Suppose $(Y',\xi')$ is obtained from
$(Y,\xi)$ by contact $(+1)$--surgery along a Legendrian knot $K
\subset Y$. Suppose $\gamma \subset Y-K$ is a closed curve, denote
$\omega=PD(\gamma) \in H^2(Y;\mathbb{R})$. View $\gamma$ as a
curve in the surgery manifold $Y'$ and denote its Poincar\'e dual
by $\omega' \in H^2(Y';\mathbb{R})$. Let $-W$ be the cobordism
obtained from $Y \times I$ by adding a $2$--handle along $K$ with
$+1$ framing  and with orientation reversed. Then
\[
\underline{F}^+_{-W;\mathrm{PD}(\gamma \times
I)}(c^+(Y,\xi;\Lambda_{\omega}))=c^+(Y',\xi';\Lambda_{\omega'}).
\]
\end{prop}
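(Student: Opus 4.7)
The plan is to adapt the argument in \cite[Theorem~2.3]{LS} to the $\omega$--twisted setting, using the open-book description of the twisted contact invariant recorded in the Remark following Theorem~\ref{thm:non-vanishing}.

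First, I would choose an open book of $Y$ compatible with $\xi$, with connected binding $B$ of genus $g>1$, arranged (after stabilization if necessary) so that the Legendrian knot $K$ lies on a page. Contact $(+1)$--surgery on $K$ amounts to modifying the monodromy by a negative Dehn twist along $K$, producing an open book for $(Y',\xi')$ with the same page and binding. By the Remark, $c^+(Y,\xi;\Lambda_\omega)$ and $c^+(Y',\xi';\Lambda_{\omega'})$ are the images of $1\in\Lambda$ under twisted Giroux cobordism maps $\underline{F}^+_{V;[\beta]}$ and $\underline{F}^+_{V';[\beta']}$, where $V$ and $V'$ are the Giroux $2$--handle cobordisms for the two open books and $\beta,\beta'$ are extensions of $\omega,\omega'$ chosen compatibly.

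Second, I would assemble a commutative square of cobordism maps. Attaching a $+1$--framed $2$--handle along $K$, viewed now as a knot in $Y_0(B)$, gives a cobordism $-W_0$ from $-Y_0(B)$ to $-Y'_0(B)$, and the gluing $V\cup(-W)$ is diffeomorphic to $(-W_0)\cup V'$ because the two $2$--handle attachments are along disjoint framed curves and hence commute. The composition law for twisted cobordism maps then yields
\[
\underline{F}^+_{-W;\mathrm{PD}(\gamma\times I)}\circ\underline{F}^+_{V;[\beta]}
= \underline{F}^+_{V';[\beta']}\circ\underline{F}^+_{-W_0;\mathrm{PD}(\gamma\times I)},
\]
with compatible cohomology classes since $\gamma$ is disjoint from $K$. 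The proposition thus reduces to showing that $\underline{F}^+_{-W_0;\mathrm{PD}(\gamma\times I)}$ sends $1\in\Lambda$ to $1\in\Lambda$, up to a unit $\pm t^n$ (the ambiguity inherent in the twisted contact invariant).

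Third, this reduced statement is the $\omega$--twisted analogue of a triangle count appearing in the untwisted LS argument: the relevant cobordism map is computed from a Heegaard triple diagram with a distinguished triangle that dominates. In the twisted setting each contributing triangle $\psi$ is weighted by $t^{\int_\psi\tilde\eta}$ for a $2$--cochain $\tilde\eta$ representing $\mathrm{PD}(\gamma\times I)$; since $\gamma$ misses the attaching region of the $2$--handle, we can choose $\tilde\eta$ supported away from those triangles, so every weight reduces to a common factor and the untwisted LS computation yields the identity in $\Lambda$. The main obstacle will be the cohomological bookkeeping in steps two and three: matching up the various classes across the commutative square and arranging the cocycle $\tilde\eta$ in concert with the Heegaard diagram so that the twist weights collapse as described. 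Granted this setup, the rest of the argument is a direct transcription of \cite[Theorem~2.3]{LS}.
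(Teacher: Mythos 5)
The paper does not prove this proposition; it is stated as a direct citation of \cite[Theorem~2.3]{LS}, and the preceding Remark makes clear the intended justification is a routine adaptation of the Lisca--Stipsicz argument to $\omega$--twisted coefficients. Your proposal is precisely that adaptation --- the commuting square of Giroux and $(+1)$--surgery $2$--handle cobordisms combined with the open-book description of the twisted contact class from the Remark after Theorem~\ref{thm:non-vanishing} is exactly the structure of the LS proof --- and the cohomological bookkeeping you flag as the main obstacle is in fact unproblematic, since $\gamma$ may be taken disjoint from both $K$ and the binding so that the $2$--cycle $\gamma\times I$ extends coherently across every piece of the square and restricts compatibly on each corner.
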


\section{Euler characteristic of $\omega$--twisted Floer homology}

The goal of this section is to prove a homological version of
Theorem~\ref{main theorem}. In order to do so, we first study the
Euler characteristic of twisted Heegaard Floer homology.

In \cite{OSzAnn2} Ozsv\'ath and Szab\'o prove that the Euler
characteristic of $HF^+(Y,\mathfrak{s})$ is equal to the Turaev
torsion when $\mathfrak{s}$ is a non-torsion Spin$^c$ structure.
More precisely, they prove the following:

\begin{thm*}{\rm\cite[Theorem 5.11, Theorem 5.2]{OSzAnn2}}
If $\mathfrak{s}$ is a non-torsion {\rm Spin}$^c$ structure over
an oriented $3$--manifold $Y$ with $b_1(Y) \geq 1$, then
$HF^+(Y,\mathfrak{s})$ is finitely generated and
\[
\chi(HF^+(Y,\mathfrak{s}))=\pm T(Y,\mathfrak{s}).
\]
When $b_1(Y)=1$, the Turaev torsion in the above equation is
defined with respect to the chamber containing
$c_1(\mathfrak{s})$.
\end{thm*}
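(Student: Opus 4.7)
The plan is to split the theorem into two independent tasks: finite generation of $HF^+(Y,\mathfrak{s})$, and the identification of its Euler characteristic with Turaev torsion. For finite generation, I would first establish the vanishing $HF^\infty(Y,\mathfrak{s}) = 0$ for every non-torsion Spin$^c$ structure. To prove this, I work over the universally twisted coefficient ring $\mathbb{Z}[H^1(Y;\mathbb{Z})]$: a non-torsion $c_1(\mathfrak{s})$ provides some class $h \in H^1(Y;\mathbb{Z})$ with $\langle h \cup c_1(\mathfrak{s}), [Y]\rangle \neq 0$, and the action of $h$ on the universally twisted $\underline{HF}^\infty$ interacts with the $U$-action to force vanishing; a universal coefficients argument then passes back to untwisted coefficients. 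Combined with the adjunction inequality, which bounds the support of $HF^+$ both in Spin$^c$ structures and in Maslov grading, the vanishing of $HF^\infty$ yields $HF^+ \cong HF^-[1]$ as finitely generated abelian groups.

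Once finite generation is known, $\chi(HF^+(Y,\mathfrak{s}))$ is an ordinary integer, and I would compute it from an admissible pointed Heegaard diagram $(\Sigma,\mbox{\boldmath${\alpha}$},\mbox{\boldmath${\beta}$},z)$. The chain-level Euler characteristic of $CF^+(Y,\mathfrak{s})$ equals a signed count of intersection points $\mathbf{x} \in \mathbb{T}_\alpha \cap \mathbb{T}_\beta$ with $s_z(\mathbf{x}) = \mathfrak{s}$, where the sign is determined by the relative orientations of the two tori in $\mathrm{Sym}^g(\Sigma)$. When $HF^\infty = 0$, the infinite $U$-tower collapses, so this signed count is finite and computes $\chi(HF^+(Y,\mathfrak{s}))$ up to a sign fixed by orientation conventions.

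The main obstacle is identifying this signed count with the Turaev torsion $T(Y,\mathfrak{s})$. My plan is to verify axiomatically that the two invariants agree by checking: connected-sum multiplicativity (K\"unneth on the Floer side, classical multiplicativity of torsion on the Turaev side), a surgery formula (the Ozsv\'ath--Szab\'o exact triangle matches Turaev's surgery formula), and agreement on a base case such as connected sums of $S^1 \times S^2$ or lens spaces where both sides are directly computable in the appropriate chamber. The most delicate point, especially when $b_1(Y) = 1$, is tracking the sign and chamber ambiguity: one must show that the orientation conventions on $\mathbb{T}_\alpha$, $\mathbb{T}_\beta$ and the basepoint $z$ select the chamber of $H^2(Y;\mathbb{R})$ containing $c_1(\mathfrak{s})$. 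This bookkeeping is where I expect the bulk of the technical work to lie.
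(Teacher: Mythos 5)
This statement is a recalled theorem of Ozsv\'ath and Szab\'o, cited from \cite{OSzAnn2}; the paper does not reprove it, but the proof of Proposition~\ref{prop:eulerchar} immediately afterwards is an explicit adaptation of the same argument to twisted coefficients and so makes the intended proof strategy visible. That strategy is a \emph{direct Heegaard-diagram computation}: one constructs a special admissible diagram by winding one $\alpha$-curve along a dual curve $a_1^{\ast}$ and placing $z$ in a specific subregion of the winding region; one then shows the intersection points representing $\mathfrak{s}$ split into subsets $L$ and $R$ on the two sides of $a_1^{\ast}$, producing a subcomplex/quotient complex pair whose Euler characteristics recombine into $\chi(\ker f_1)$ for an explicit map $f_1$, and this kernel is shown by a Fox-calculus-type computation to have Euler characteristic $\pm T(Y,\mathfrak{s})$. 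The chamber statement for $b_1(Y)=1$ is not a separate bookkeeping step: it falls out of the exact placement of $z$ inside the winding region, which governs $n_z$ on periodic domains.

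Your first three steps are broadly in the right spirit. The vanishing of $HF^\infty$ for non-torsion $\mathfrak{s}$ and the resulting finite generation of $HF^+$ is indeed the entry point, and your twisted-coefficients mechanism for the vanishing is essentially the same device the paper itself uses for the torsion case in Lemma~\ref{lem:hf infinity}. Your step three, reducing $\chi(HF^+)$ to a signed count of intersection points, is also the correct chain-level starting point.

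Your step four, however, is a genuinely different route and as stated has a real gap. Ozsv\'ath and Szab\'o do \emph{not} argue axiomatically; they compute. An axiomatic identification would require (i) a precise uniqueness theorem characterizing Turaev torsion on pairs $(Y,\mathfrak{s})$ with $\mathfrak{s}$ non-torsion by connected sum and surgery behavior alone, which is not a known clean statement, (ii) base cases where both sides are computable for \emph{non-torsion} Spin$^c$ structures, and your suggested base cases do not supply these: lens spaces have $b_1=0$ so all Spin$^c$ structures are torsion, and for $\#^k(S^1\times S^2)$ both sides vanish identically for non-torsion $\mathfrak{s}$, so they give no normalization, and (iii) compatibility of the sign/chamber ambiguity with the surgery triangle, which is subtle since the exact triangle relates three manifolds whose torsions live in different chambers. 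Separately, Euler characteristics do not propagate through a long exact sequence without grading control, so matching the surgery exact triangle to Turaev's surgery formula would itself need the finite generation and grading bounds established first, in each corner of the triangle. In short, the approach you sketch could in principle be made to work but would require substantial machinery that you have not identified, whereas the actual proof bypasses all of it with a single winding-diagram computation.
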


\begin{rem}
The proof of the above theorem can be modified to show that when
$\mathfrak{s}$ is a non-torsion Spin$^c$ structure, the
$\omega$--twisted Heegaard Floer homology
$\underline{HF}^+(Y,\mathfrak{s};\Lambda_{\omega})$ is a
finitely generated vector space over $\Lambda$, and
\[
\chi(\underline{HF}^+(Y,\mathfrak{s};\Lambda_{\omega}))=\pm
T(Y,\mathfrak{s}).
\]
\end{rem}

For a torsion Spin$^c$ structure $\mathfrak{s}$, however,
$HF^+(Y,\mathfrak{s})$ is infinitely generated. Ozsv\'ath and
Szab\'o \cite{OSzAnn2} introduce truncated Euler characteristic
for torsion Spin$^c$ structures and prove when $b_1(Y)=1 \text{ or
} 2$, it is related to the Turaev torsion.

When we use $\omega$--twisted Heegaard Floer homology, the
situation is much simpler. In fact we have the following:

\begin{prop}\label{prop:eulerchar}
Suppose $\mathfrak{s}$ is a torsion Spin$^c$ structure over an
oriented $3$--manifold $Y$ with $b_1(Y) \geq 1$. Then for any
non-zero cohomology class  $\omega \in  H^2(Y;\mathbb{R})$, the
twisted Floer homology
$\underline{HF}^+(Y,\mathfrak{s};\Lambda_{\omega})$ is a finitely
generated vector space over $\Lambda$ and
\[
\chi(\underline{HF}^+(Y,\mathfrak{s};\Lambda_{\omega}))=\pm
T(Y,\mathfrak{s}).
\]
Here the Euler characteristic is taken by viewing
$\underline{HF}^+(Y,\mathfrak{s};\Lambda_{\omega})$ as a vector
space over $\Lambda$.
\end{prop}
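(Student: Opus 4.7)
The plan is to model the argument on Ozsv\'ath--Szab\'o's proof of the untwisted non-torsion case \cite{OSzAnn2}, where the decisive simplification is that $HF^\infty$ vanishes. With $\omega$-twisted coefficients and $\omega\neq 0$, the same vanishing can be arranged even for torsion Spin$^c$ structures, after which the Euler characteristic computation becomes a direct adaptation of the one for the non-torsion case.

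The first step is to show $\underline{HF}^\infty(Y,\mathfrak{s};\Lambda_\omega)=0$ for every Spin$^c$ structure $\mathfrak{s}$ whenever $\omega\neq 0$. By a universal coefficient argument, this homology is computed from the universally twisted $\underline{HF}^\infty(Y,\mathfrak{s})$ by tensoring over $\mathbb{Z}[H^1(Y;\mathbb{Z})]$ with $\Lambda_\omega$, together with higher Tor terms. For torsion $\mathfrak{s}$ the Ozsv\'ath--Szab\'o computation identifies the universally twisted infinity group with a single free copy of $\mathbb{Z}[U,U^{-1}]$ carrying trivial $H^1$-action, so the calculation reduces to the Koszul complex of $\Lambda_\omega$ against the augmentation ideal of $\mathbb{Z}[H^1(Y;\mathbb{Z})]$. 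Because $\omega\neq 0$, some $h\in H^1(Y;\mathbb{Z})$ has $r=\langle h\cup\omega,[Y]\rangle\neq 0$; the element $t^r-1\in\Lambda$ has nonzero minimal-degree coefficient and is therefore a unit in the Novikov field $\Lambda$. This unit makes one of the Koszul differentials invertible, forcing the whole complex to be acyclic and giving the desired vanishing in every homological degree.

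The second step combines this vanishing with the long exact sequence relating $\underline{HF}^-$, $\underline{HF}^\infty$, $\underline{HF}^+$ to conclude that $\underline{HF}^+(Y,\mathfrak{s};\Lambda_\omega)$ coincides with its reduced part and is finitely generated over $\Lambda$, so the Euler characteristic is well defined. To identify it with Turaev torsion, I would pick a strongly admissible pointed Heegaard diagram for $(Y,\mathfrak{s})$ and repeat the combinatorial argument of \cite[Theorem~5.11]{OSzAnn2}, keeping track of the Novikov weights $t^{\int_\phi\eta}$ throughout. The weighted signed count of intersection points representing $\mathfrak{s}$ then matches the image of $T(Y,\mathfrak{s})\in\mathbb{Z}[H^1(Y;\mathbb{Z})]$ under the ring map $h\mapsto t^{\langle h\cup\omega,[Y]\rangle}$, which equals $\pm T(Y,\mathfrak{s})$ after the standard sign/unit ambiguity of the torsion is absorbed.

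The main obstacle is the first step: pinning down the structure of the universally twisted $\underline{HF}^\infty(Y,\mathfrak{s})$ for torsion $\mathfrak{s}$ precisely enough, and running the Tor computation carefully so that vanishing holds in every homological degree (not only at the tensor product level). Once that is secured, the finiteness of $\underline{HF}^+(Y,\mathfrak{s};\Lambda_\omega)$ is automatic, and the Euler characteristic identification is a matter of propagating the $t^{\int_\phi\eta}$-weights through the standard Ozsv\'ath--Szab\'o argument without introducing new analytical input.
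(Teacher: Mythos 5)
Your overall strategy matches the paper's: establish $\underline{HF}^{\infty}(Y,\mathfrak{s};\Lambda_{\omega})=0$ for torsion $\mathfrak{s}$ via the universal coefficients spectral sequence and a Koszul computation (this is exactly the paper's Lemma~\ref{lem:hf infinity}), then deduce finite generation of $\underline{HF}^+(Y,\mathfrak{s};\Lambda_{\omega})$ from the long exact sequence (the paper's Corollary~\ref{cor:high grading}), and finally compute the Euler characteristic by a chain-level argument on a suitable Heegaard diagram.

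Where you are vague, and where your citation is slightly off, is the last step. You propose to ``repeat the combinatorial argument of Theorem~5.11'' of \cite{OSzAnn2}, but that theorem treats \emph{non-torsion} Spin$^c$ structures. For a torsion $\mathfrak{s}$ on a manifold with $b_1\ge 1$, the intersection points representing $\mathfrak{s}$ do not localize automatically; one needs the winding-region construction of \cite[Section~5.3]{OSzAnn2} (the proof of Theorem~5.2, not~5.11). Concretely: wind $\alpha_1$ along a dual curve so that intersection points representing $\mathfrak{s}$ lie in the winding region, split the complex into a subcomplex $L^+$ and quotient $R^+$, and use the map $f_1\colon R^+\to L^+$ whose kernel is a finite complex with $\chi(\ker f_1)=\pm T(Y,\mathfrak{s})$ by \cite[Section~5.2]{OSzAnn2}. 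The simplification provided by twisting is not a change in this combinatorics but rather that, by your Step~1, the truncated Euler characteristics of $\underline{HF}^+$ stabilize and equal $\chi(\ker f_1)$ on the nose, so no genuinely new truncation analysis is needed. A second, smaller point: your phrase ``the image of $T(Y,\mathfrak{s})\in\mathbb{Z}[H^1(Y;\mathbb{Z})]$ under the ring map'' is confused --- for torsion $\mathfrak{s}$ the refined torsion $T(Y,\mathfrak{s})$ is already an integer, not a group-ring element, and in fact the Novikov weights $t^{\int_\phi\eta}$ are units in the field $\Lambda$ and therefore drop out of any dimension count; the identification with $\pm T(Y,\mathfrak{s})$ happens at the level of the signed count of generators of $\ker f_1$, untouched by the twisting. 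With those clarifications your outline becomes the paper's proof.
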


Notice when $b_1(Y)=1$ and $\mathfrak{s}$ is a torsion Spin$^c$
structure, $T(Y,\mathfrak{s})$ does not depend on the choice of a
chamber, see \cite{T}. The following lemma  is an analogue of
 \cite[Corollary 8.5]{JM}.

\begin{lem}
\label{lem:hf infinity} Let $\mathfrak{s}$ be a torsion {\rm
Spin}$^c$ structure on a $3$--manifold $Y$, $\omega \in
H^2(Y;\mathbb{R})$ be a non-zero cohomology class. Then
\[
\underline{HF}^{\infty}(Y,\mathfrak{s};\Lambda_{\omega})\cong 0.
\]
\end{lem}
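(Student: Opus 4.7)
The plan is to combine two compatible actions of $H^1(Y;\mathbb{Z})$ on the $\omega$--twisted complex: the tautological group--ring action coming from the $\Lambda_{\omega}$--module structure, and the geometric $H^1$--action on Heegaard Floer homology, which is nilpotent for torsion Spin$^c$ structures. The non--triviality of $\omega$ will make the first action invertible, while nilpotency of the second will force the homology to vanish.

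First, since $\omega\neq 0$, choose $h\in H^1(Y;\mathbb{Z})$ with $c:=\langle h\cup \omega, [Y]\rangle \neq 0$; such an $h$ exists because $\mathrm{PD}(\omega)\in H_1(Y;\mathbb{R})$ is non--zero and the evaluation pairing between $H^1$ and $H_1$ is non--degenerate over $\mathbb{R}$. The group--ring element $h\in\mathbb{Z}[H^1(Y;\mathbb{Z})]$ acts on $\Lambda_{\omega}$ as multiplication by $t^{c}$, so $h-1$ acts as multiplication by $t^{c}-1$. Since $\Lambda$ is a field and $c\neq 0$, the element $t^{c}-1$ is a unit, and therefore so is any power $(t^{c}-1)^{N}$.

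The remaining task is to show that some power $(h-1)^{N}$ acts as zero on $\underline{HF}^{\infty}(Y,\mathfrak{s};\Lambda_{\omega})$; combined with the previous paragraph, this will force the homology to vanish. To prove this nilpotency I would filter $\underline{CF}^{\infty}(Y,\mathfrak{s};\Lambda_{\omega})$ by the Novikov $t$--valuation. On the associated graded, the lowest--order contribution of the Whitney--disk weight $t^{\int_{\phi}\eta}$, after subtracting the identity, is identified with the geometric chain--level action $A_{\mathrm{PD}(h)}$ of the corresponding $H^{1}$--class. For a torsion Spin$^c$ structure $\mathfrak{s}$, this geometric action makes $HF^{\infty}(Y,\mathfrak{s})$ a module over the exterior algebra $\Lambda^{*}(H^{1}(Y;\mathbb{Z})/\mathrm{Tors})$, so in particular $A_{\mathrm{PD}(h)}^{2}=0$ on $HF^{\infty}$. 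Propagating this squared--zero relation through the spectral sequence gives an integer $N$ for which $(h-1)^{N}=0$ on $\underline{HF}^{\infty}(Y,\mathfrak{s};\Lambda_{\omega})$.

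The principal obstacle is making this spectral sequence argument rigorous. The Novikov filtration is indexed by the real numbers that arise as $\int_{\phi}\eta$, so convergence requires care, and identifying the lowest--order perturbation of the twisted differential precisely with the geometric $H^{1}$--action demands a careful chain--level comparison. A cleaner route, in the spirit of \cite[Corollary 8.5]{JM} for monopole Floer homology, would instead construct an explicit chain homotopy relating the algebraic operator $h-1$ to the geometric action $A_{\mathrm{PD}(h)}$ on $\underline{CF}^{\infty}(Y,\mathfrak{s})$, bypassing convergence issues altogether; the torsion assumption on $\mathfrak{s}$ would enter precisely to ensure that the resulting nilpotency on homology is of finite order.
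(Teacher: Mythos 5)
Your overall strategy---exhibit an operator that must act both invertibly and nilpotently, and conclude the module is zero---captures the right spirit, and the observation that $h-1$ acts as multiplication by the unit $t^{c}-1\in\Lambda$ is correct and is indeed the crucial use of $\omega\ne 0$. However, the half of the argument that is supposed to establish nilpotency does not go through as proposed, and it misses the input that actually makes the paper's proof work.

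First, the filtration by Novikov $t$--valuation is not well behaved here: the exponents $\int_{\phi}\eta$ that appear in the twisted differential can be of either sign, so the putative filtration is not preserved by $\underline{\partial}$, and there is no associated graded complex of the form you describe. Second, and more fundamentally, you conflate two distinct structures. The operator $h-1$ comes from the $\mathbb{Z}[H^{1}(Y;\mathbb{Z})]$--module structure of the universally twisted complex, and after tensoring with $\Lambda_{\omega}$ it is literally scalar multiplication by $t^{c}-1$; it is not the geometric $H^{1}$--action $A_{\mathrm{PD}(h)}$ of \cite{OSzAnn2}, which is a different operator acting on the (untwisted) Floer homology via counts of disks weighted by intersection numbers. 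There is no chain--level identification of the two, so the planned ``propagate the squared--zero relation through the spectral sequence'' step has no starting point. Worse, since $h-1$ is a scalar unit on $\underline{HF}^{\infty}(Y,\mathfrak{s};\Lambda_{\omega})$, proving it is nilpotent on that module is \emph{verbatim equivalent} to the vanishing you are trying to prove, so the argument as organized is circular unless the nilpotency is derived from a source independent of the twisted homology itself.

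The ingredient you are missing is \cite[Theorem~10.12]{OSzAnn2}: for a torsion Spin$^{c}$ structure, the universally twisted $\underline{HF}^{\infty}(Y,\mathfrak{s})$ is isomorphic to $\mathbb{Z}[U,U^{-1}]$ as a \emph{trivial} $\mathbb{Z}[H^{1}(Y;\mathbb{Z})]$--module. Once you have this, you should not try to commute $-\otimes_{\mathbb{Z}[H^1]}\Lambda_\omega$ past homology naively (the coefficient module is not flat); instead, run the universal coefficients spectral sequence with $E_{2}$ page $\mathrm{Tor}^{i}_{\mathbb{Z}[H^{1}]}(\underline{HF}^{\infty}_{j}(Y,\mathfrak{s});\Lambda_{\omega})$. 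Resolving the trivial module $\mathbb{Z}$ by the Koszul complex $\otimes_{i}(\mathbb{A}\xrightarrow{e^{h_{i}}-1}\mathbb{A})$ and tensoring with $\Lambda_{\omega}$ turns the maps into $t^{d_{i}}-1$; since $\omega\ne 0$ forces some $d_{i}\ne 0$, that factor is an isomorphism and the whole Koszul complex is acyclic, so all Tor groups vanish and hence so does $\underline{HF}^{\infty}(Y,\mathfrak{s};\Lambda_{\omega})$. This is where the ``invertibility of $t^{c}-1$'' you noticed is correctly deployed: it kills a Tor group on the $E_{2}$ page, not a nilpotent operator on the final answer.
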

\begin{proof}
It is shown in \cite[Theorem~10.12]{OSzAnn2} that for each
torsion Spin$^c$ structure $\mathfrak{s}$ , the universally
twisted Heegaard Floer homology
 \[
 \underline{HF}^{\infty}(Y,\mathfrak{s}) \cong
 \mathbb{Z}[U,U^{-1}],
\]
and it is  a trivial $\mathbb{Z}[ H^1(Y;\mathbb{Z})]$--module.
Then there is a universal coefficients spectral sequence
converging to the $\omega$--twisted Floer homology
$\underline{HF}^{\infty}(Y,\mathfrak{s};\Lambda_{\omega})$, and
its $E_2$ term is given by
$$
\mathrm{Tor}_{\mathbb{Z}[
H^1(Y;\mathbb{Z})]}^i(\underline{HF}^{\infty}_{j}(Y,\mathfrak{s});\Lambda_{\omega}).
$$

Notice $\underline{HF}^{\infty}_{j}(Y,\mathfrak{s})=0 \text{\rm \
or } \mathbb{Z}$. There is a free resolution of $\mathbb{Z}$ as a
module over $\mathbb{A}=\mathbb{Z}[ H^1(Y;\mathbb{Z})]$, given by
\[
\otimes_{i=1}^{b_1(Y)}(\mathbb{A}
\stackrel{e^{h_i}-1}{\longrightarrow} \mathbb{A}),
\]
where $h_i$ is a basis for $ H^1(Y;\mathbb{Z})$, and $e^{h_i}$ is
the corresponding element in the group ring $\mathbb{A}$ (see
\cite[Lemma~2.3]{OSzSympl}). So the $E_2$ term of the above
spectral sequence is calculated by the homology of
\[
\otimes_{i=1}^{b_1(Y)}(\Lambda_{\omega}
\stackrel{t^{d_i}-1}{\longrightarrow} \Lambda_{\omega}),
\]
where $d_i=\langle h_i \cup \omega,[Y] \rangle$. By assumption
$\omega \neq 0 \in  H^2(Y;\mathbb{R})$, at least one of $d_i$ is
non-zero. For this $i$, the map
\[
\Lambda_{\omega} \stackrel{t^{d_i}-1}{\longrightarrow}
\Lambda_{\omega}
\]
is an isomorphism, hence the corresponding complex has zero
homology. From this we see that all the $E_2$ terms are $0$, so
are the $E_{\infty}$ terms. This proves the lemma.
\end{proof}

Recall that for a torsion Spin$^c$ structure $\mathfrak{s}$, there
is an absolute $\mathbb{Q}$--grading on
$\underline{HF}^+(Y,\mathfrak{s})$ which lifts the relative
$\mathbb{Z}$--grading defined in \cite{OSzAnn2}, see
\cite{OSzFour}. This is also the case for our $\omega$--twisted
Floer homology
$\underline{HF}^+(Y,\mathfrak{s};\Lambda_{\omega})$.
Suppose the absolute grading is supported in $\mathbb{Z}+d$ for
some constant $d \in \mathbb{Q}$. The Euler characteristic of
$\underline{HF}^+(Y,\mathfrak{s};\Lambda_{\omega})$ is
defined to be
\[
\chi(\underline{HF}^+(Y,\mathfrak{s};\Lambda_{\omega}))=
\sum_{n \in \mathbb{Z}}(-1)^n \mathrm{rank}\:
\underline{HF}^+_{d+n}(Y,\mathfrak{s};\Lambda_{\omega}).
\]
Notice our $d$ is unique up to adding an integer, so the Euler
characteristic is defined up to sign.

This absolute $\mathbb{Q}$--grading and Lemma~\ref{lem:hf
infinity} lead to the following corollary.

\begin{cor}
\label{cor:high grading} Let $\mathfrak{s}$ be a torsion {\rm
Spin}$^c$ structure on a $3$--manifold $Y$, $\omega\in
H^2(Y;\mathbb{R})$ be a non-zero cohomology class. Then for all
sufficiently large $N \in \mathbb{Z}$,
\[
\underline{HF}^+_{d+N}(Y,\mathfrak{s};\Lambda_{\omega})=0.
\]
\end{cor}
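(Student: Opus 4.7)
The plan is to deduce the corollary directly from Lemma~\ref{lem:hf infinity} via the standard long exact sequence relating $\underline{HF}^-$, $\underline{HF}^{\infty}$, and $\underline{HF}^+$. The short exact sequence of $\omega$--twisted chain complexes
\[
0 \to \underline{CF}^-(Y,\mathfrak{s};\Lambda_{\omega}) \to \underline{CF}^{\infty}(Y,\mathfrak{s};\Lambda_{\omega}) \to \underline{CF}^+(Y,\mathfrak{s};\Lambda_{\omega}) \to 0
\]
induces a long exact triangle in homology that respects the absolute $\mathbb{Q}$--grading, with the connecting map lowering degree by one. Since Lemma~\ref{lem:hf infinity} asserts that the middle term vanishes, this long exact sequence collapses into isomorphisms
\[
\underline{HF}^+_{d+N}(Y,\mathfrak{s};\Lambda_{\omega}) \cong \underline{HF}^-_{d+N-1}(Y,\mathfrak{s};\Lambda_{\omega})
\]
for every integer $N$, so it will suffice to show that $\underline{HF}^-$ is supported in gradings bounded above.

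For the second step, I would observe that $\underline{CF}^-(Y,\mathfrak{s};\Lambda_{\omega})$ is generated over $\Lambda_{\omega}$ by pairs $[\mathbf{x},i]$ with $\mathbf{x}\in\mathbb{T}_\alpha\cap\mathbb{T}_\beta$ and $i<0$, and the grading formula used in the untwisted theory carries over verbatim, since passing to $\Lambda_{\omega}$ changes only the differential and the coefficient ring. In particular, the grading of $[\mathbf{x},i]$ differs from that of $[\mathbf{x},0]$ by $2i$. Because $\mathbb{T}_\alpha\cap\mathbb{T}_\beta$ is finite and $i\le -1$, every generator of $\underline{CF}^-$ has grading bounded above by some constant depending on the Heegaard diagram. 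Hence $\underline{CF}^-_D(Y,\mathfrak{s};\Lambda_{\omega})=0$ for all sufficiently large $D$, and the same vanishing passes to $\underline{HF}^-$. Combined with the isomorphism of the first step, this yields the claimed vanishing of $\underline{HF}^+_{d+N}$ for all sufficiently large $N$.

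There is no serious obstacle to overcome: the argument is essentially mechanical once Lemma~\ref{lem:hf infinity} is in hand. The only point worth flagging is that both the short exact sequence of chain complexes and the absolute $\mathbb{Q}$--grading transport unchanged from the untwisted to the $\omega$--twisted setting. This is automatic because the $\omega$--twisted complexes share the same sets of generators with their untwisted counterparts and differ only in coefficients, so neither the filtration by $i$ nor the grading convention is affected.
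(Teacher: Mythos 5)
Your proposal is correct and follows essentially the same argument as the paper. Both proofs combine Lemma~\ref{lem:hf infinity} with the long exact sequence relating $\underline{HF}^-$, $\underline{HF}^\infty$, and $\underline{HF}^+$, and both rely on the fact that $\underline{HF}^-$ is supported in gradings bounded from above; the only cosmetic difference is that the paper invokes boundedness of $\underline{HF}^-$ first to conclude $\underline{HF}^+_{d+N}\cong\underline{HF}^\infty_{d+N}$ in high degrees, whereas you use the vanishing of $\underline{HF}^\infty$ first to obtain $\underline{HF}^+_{d+N}\cong\underline{HF}^-_{d+N-1}$, and you spell out the boundedness claim from the chain level rather than asserting it.
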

\begin{proof}
The elements in
$\underline{HF}^-(Y,\mathfrak{s};\Lambda_{\omega})$ have
absolute $\mathbb{Q}$--grading bounded from above. So by the exact
sequence relating $\underline{HF}^-,\underline{HF}^{\infty} \text{ and } \underline{HF}^+$ we see that
for all sufficiently large $N \in \mathbb{Z}$,
\[
 \underline{HF}^+_{d+N}(Y,\mathfrak{s};\Lambda_{\omega}) \cong
 \underline{HF}^{\infty}_{d+N}(Y,\mathfrak{s};\Lambda_{\omega}),
\]
which  is zero by Lemma \ref{lem:hf infinity}.
\end{proof}

\begin{proof}[Proof of Proposition \ref{prop:eulerchar}](Compare the proof of \cite[Theorem~5.2]{OSzAnn2}.)

From Corollary~\ref{cor:high grading},
$\underline{HF}^+(Y,\mathfrak{s};\Lambda_{\omega})$ is a finitely
generated vector space over $\Lambda$. So we can talk about its
Euler characteristic. As in \cite[Section~5.3]{OSzAnn2} we can
construct a Heegaard diagram
$(\Sigma,\mbox{\boldmath$\alpha$},\mbox{\boldmath$\beta$},z)$ for
$Y$ such that there is a periodic domain $P_1$ containing
$\alpha_1$ with multiplicity one in its boundary. Extend $\{P_1\}$
to a basis $\{P_1,P_2,\dots,P_b\}$ for periodic domains such that
$P_2,\dots,P_b$ does not contain $\alpha_1$ in their boundaries.
This can be achieved by adding proper multiples of $P_1$ to each
$P_i$. Choose a set of dual simple closed curves $\{a_i^{\ast}\}$
for $\{\alpha_i\}$, namely, $a_i^{\ast}$ meets $\alpha_i$
transversely in a single point and misses all other $\alpha_j$.
Wind $\alpha_1$ along $a_1^{\ast}$ $n$ times and put the base
point $z$ in this winding region, to the right of $a_1^{\ast}$ and
in the $\frac{n}{2} \text{th}$ subregion of the winding region.
(See \cite[Figure~6]{OSzAnn2}.) Wind $\alpha_2,\dots,\alpha_g$
along $a_2^{\ast},\dots,a_g^{\ast}$ sufficiently many times, such
that any nontrivial linear combination of $P_2,\dots,P_n$ has both
large positive and negative local multiplicities as in
\cite[Lemma~5.4]{OSzAnn1}.

When $n$ is sufficiently large, by \cite[Lemma~5.4]{OSzAnn2}, if
an intersection point represents the fixed torsion Spin$^c$
structure $\mathfrak{s}$, then its $\alpha_1$--component must lie
in the winding region corresponding to $\alpha_1$. These
intersection points are partitioned into two subsets according to
whether they lie to the left or right side of $a_1^{\ast}$. Let
$L^+$ ($R^+$) denote the subgroup of
$\underline{CF}^+(Y,\mathfrak{s};\Lambda_{\omega})$ generated by
the points which lie to the left (right) of $a_1^{\ast}$.

As proved in \cite[Lemma 5.5 and Lemma 5.6]{OSzAnn2}, $L^+$ is a
subcomplex of
$\underline{CF}^+(Y,\mathfrak{s};\Lambda_{\omega})$, and
$R^+$ is a quotient complex. We have a short exact sequence
\[
0 \longrightarrow L^+ \longrightarrow
\underline{CF}^+(Y,\mathfrak{s};\Lambda_{\omega}) \longrightarrow
R^+ \longrightarrow 0,
\]
which gives rise to a long exact sequence:
\[
\cdots \to  H_{d+i}(L^+) \to
\underline{HF}^+_{d+i}(Y,\mathfrak{s};\Lambda_{\omega})
\longrightarrow  H_{d+i}(R^+) \stackrel{\delta}{\longrightarrow}
 H_{d+i-1}(L^+) \to \cdots.
\]
By Corollary~\ref{cor:high grading}, for sufficiently large $i$,
$\underline{HF}^+_{d+i}(Y,\mathfrak{s};\Lambda_{\omega})=0$.
It follows that for all sufficiently large $N$,
\begin{equation}\label{Eq:ChiHF+}
\chi(\underline{HF}^+_{\leq
d+N}(Y,\mathfrak{s};\Lambda_{\omega}))=\chi( H_{\leq
d+N}(L^+))+ \chi( H_{\leq d+N+1}(R^+)).
\end{equation}
 On the other hand,
define $f_1\co R^+ \to L^+$ to be
\[
f_1([x_i^+,i])=[x_i^-,i-n_z(\phi)]t^{\int_{\phi}\eta}.
\]
Here $\phi$ is the disk connecting $x_i^+$ to $x_i^-$ which is
supported in the winding region corresponding to $\alpha_1$, and
$\eta$ is the cochain (representing $\omega$) used to define
$CF^{\circ}(Y;\Lambda_{\omega})$. Then we have another short exact
sequence:
\[
0 \longrightarrow \mathrm{ker}f_1 \longrightarrow R^+
\stackrel{f_1}{\longrightarrow} L^+ \longrightarrow 0,
\]
which induces a long exact sequence:
\[
\cdots \to  H_{d+i}(\mathrm{ker}f_1) \longrightarrow
 H_{d+i}(R^+) \stackrel{f_1}{\longrightarrow}
 H_{d+i-1}(L^+) \longrightarrow  H_{d+i-1}(\mathrm{ker}f_1)
\to \cdots.
\]
It is proved in \cite[Section~5.2]{OSzAnn2} that $\mathrm{ker}f_1$
is a finite dimensional graded vector space over $\Lambda$,
and has Euler characteristic $\chi(\mathrm{ker}f_1)=\pm
T(Y,\mathfrak{s})$. So for all sufficiently large $N$,
\begin{equation}\label{Eq:ChiKer}
\chi(\mathrm{ker} f_1)=\chi( H_{\leq d+N}(L^+))+ \chi( H_{\leq
d+N+1}(R^+)).
\end{equation}
Combining equations (\ref{Eq:ChiHF+}), (\ref{Eq:ChiKer}) and
Corollary~\ref{cor:high grading}, we obtain the desired
result.\end{proof}

Having proved Proposition~\ref{prop:eulerchar}, we can use the
same argument as in \cite[Section 3]{Ni2} to prove the following
homological version of Theorem~\ref{main theorem}.

\begin{prop}
\label{prop:homological main thm} Suppose $Y$ is a closed
$3$--manifold, $F \subset Y$ is an embedded torus. Let $M$ be the
$3$--manifold obtained by cutting $Y$ open along $F$. The two
boundary components of $M$ are denoted by $F_{-}$,$F_{+}$. If
there is a cohomology class $\omega \in  H^2(Y;\mathbb{Z})$
 such that $\omega([F]) \neq 0$ and
$\underline{HF}^+(Y;\Lambda_{\omega}) =\Lambda$, then
$M$ is a homology product, namely, the two maps
\[
i_{\pm \ast}\co  H_{\ast}(F_{\pm}) \to  H_{\ast}(M)
\]
are isomorphisms.
\end{prop}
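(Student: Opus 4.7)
The plan is to convert the Floer-theoretic hypothesis into a Turaev-torsion constraint, and then into an integral homological statement about $M$ via the Mayer--Vietoris sequence of an infinite cyclic cover of $Y$, mirroring the strategy of \cite[Section~3]{Ni2}.

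\emph{Step~1 (Turaev torsion is a single monomial).} The decomposition $\underline{HF}^+(Y;\Lambda_\omega)=\bigoplus_{\mathfrak s}\underline{HF}^+(Y,\mathfrak s;\Lambda_\omega)$ together with the hypothesis $\underline{HF}^+(Y;\Lambda_\omega)\cong\Lambda$ forces a unique Spin$^c$ structure $\mathfrak s_0$ to contribute, with $\underline{HF}^+(Y,\mathfrak s_0;\Lambda_\omega)\cong\Lambda$ and every other summand vanishing. Proposition~\ref{prop:eulerchar} (together with the non-torsion analogue noted in the remark just above) then yields $T(Y,\mathfrak s_0)=\pm 1$ and $T(Y,\mathfrak s)=0$ for all $\mathfrak s\neq\mathfrak s_0$, so that $T(Y)$, viewed as an element of $\mathbb Z[H^2(Y;\mathbb Z)]$ via the Spin$^c$--structure basis, is $\pm$ a single monomial.

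\emph{Step~2 (Cyclic cover and Mayer--Vietoris).} Since $\omega([F])\neq 0$, the Poincar\'e dual $\mathrm{PD}([F])\in H^1(Y;\mathbb Z)$ is non-zero; after dividing by its divisibility one obtains a primitive class determining an infinite cyclic cover $\tilde Y\to Y$ in which $F$ lifts discretely, splitting $\tilde Y$ as a bi-infinite chain $\bigsqcup_n M_n$ glued along $F_{+,n}\sim F_{-,n+1}$. The corresponding Mayer--Vietoris sequence with $R=\mathbb Z[t^{\pm1}]$ coefficients contains the map
\[
\alpha_1=(i_+)_*-t(i_-)_*\co H_1(F)\otimes R\longrightarrow H_1(M)\otimes R ,
\]
whose determinantal invariant is the one-variable Alexander polynomial $\Delta_Y^{[F]}(t)$. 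Via the Turaev/Alexander correspondence, the monomial Turaev torsion from Step~1 forces $\Delta_Y^{[F]}$ to be of a specific, tightly constrained form.

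\emph{Step~3 (Inclusion maps are isomorphisms).} Writing $\det\alpha_1$ as a polynomial in $t$, the constant term equals $\det(i_+)_*$ and the top-degree coefficient equals $\pm\det(i_-)_*$. The explicit shape of $\Delta_Y^{[F]}$ from Step~2 forces these extremal coefficients to be $\pm 1$ and the total degree of $\det\alpha_1$ to coincide with $\mathrm{rank}_{\mathbb Z}H_1(M)$. Hence $\mathrm{rank}\,H_1(M)=2=\mathrm{rank}\,H_1(F)$ and both maps $(i_\pm)_*\co H_1(F_\pm;\mathbb Z)\to H_1(M;\mathbb Z)$ are integral isomorphisms. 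The $H_0$ case is immediate from connectedness of $F$, $M$, and $Y$, and the $H_2$ case follows from Poincar\'e--Lefschetz duality on the bounded 3-manifold $M$ combined with the $H_1$ conclusion.

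\emph{Main obstacle.} The delicate step is Step~2's passage from ``$T(Y)$ is a monomial'' to the precise shape of $\Delta_Y^{[F]}(t)$, including correct tracking of signs and powers of $t$ across the Turaev/Alexander comparison (which in particular requires a correction factor depending on $b_1(Y)$). One must then argue that the Floer rank-one hypothesis controls not merely the Alexander polynomial rationally, but determines its leading and trailing coefficients integrally, so that the extremal-coefficient argument of Step~3 yields integral isomorphisms rather than merely rational ones. This is exactly the careful bookkeeping carried out in \cite[Section~3]{Ni2}, which transfers verbatim to the present torus setting once Proposition~\ref{prop:eulerchar} is in place.
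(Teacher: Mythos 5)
Your proposal is correct and follows essentially the same approach as the paper, which proves this proposition by invoking Proposition~\ref{prop:eulerchar} together with a direct reference to the argument of \cite[Section~3]{Ni2} (Euler characteristic of twisted Floer homology $\Rightarrow$ monomial Turaev torsion $\Rightarrow$ control on the Alexander-type module of the infinite cyclic cover $\Rightarrow$ homology product). You have filled in the intermediate steps that the paper leaves to the reference, and you correctly identify where the care is needed (the Turaev/Alexander comparison and the passage to integral, not merely rational, isomorphisms).
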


\section{Proof of Theorem~\ref{main theorem}}

In this section, we will prove Theorem~\ref{main theorem}. We will
essentially follow Ghiggini's argument in \cite{Gh}, with little
modifications when necessary.

\begin{rem}\label{rem:smoothness}
Before we get into the proof, we make a remark on the smoothness
of foliations. In \cite{Ga}, the foliations constructed are
smooth, except possibly along torus components of the given taut
surface. In the proof of \cite[Theorem~3.8]{Gh}, one also modifies
a foliation further by replacing a compact leaf $F$ with an
$F\times I$, which is foliated by $F\times t$'s. The new foliation
may not be smooth if $F$ is a torus. However, by
\cite[Proposition~2.9.4]{ET}, the new foliation can be
approximated in $C^0$--topology by smooth weakly semi-fillable
(hence weakly fillable by \cite{El,Et}) contact structures. Hence
one can run the now standard argument as in \cite[Section~41]{KM}
and \cite{OSzGenus} to get the nontriviality of the corresponding
Heegaard Floer homology.
\end{rem}

\begin{lem}\label{lem:annulus}
Conditions are as in Theorem~\ref{main theorem}. Cut $Y$ open
along $F$, the resulting manifold $M$ has two boundary components
$F_+,F_-$. Suppose $c\subset F_+$ is an essential simple closed
curve, then there exists an embedded annulus $A\subset M$, such
that $c$ is one component of $\partial A$, and the other component
of $\partial A$ lies on $F_-$.
\end{lem}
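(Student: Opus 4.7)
The plan is to adapt Ghiggini's foliation-and-contact-invariant argument from~\cite{Gh} to the twisted coefficient setting. First, by Proposition~\ref{prop:homological main thm} the inclusion-induced maps $i_{\pm *}\co H_1(F_\pm)\to H_1(M)$ are isomorphisms, so the primitive class $[c]\in H_1(F_+)$ corresponds to a primitive class in $H_1(F_-)$, which is realized by an essential simple closed curve $c'\subset F_-$. Hence $c$ and $c'$ cobound a properly embedded oriented surface $S\subset M$ representing a relative class in $H_2(M,c\cup c')$. The goal is to show that a Thurston-norm minimizing representative of this class is a disjoint union of annuli; extracting the component with $c$ as one boundary component (its other boundary necessarily lying on $F_-$, since boundary-parallel annular components can be discarded from a norm minimizer) then yields the required annulus $A$.

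Suppose for contradiction that a norm minimizing $S$ satisfies $\chi_-(S)>0$. Equip $M$ with the sutured manifold structure whose sutures are two parallel copies of $c$ on $F_+$ together with two parallel copies of $c'$ on $F_-$; tautness follows from irreducibility of $Y$ and the homology product conclusion of Proposition~\ref{prop:homological main thm}. Apply Gabai's theorem~\cite{Ga} to construct a taut, transversely oriented foliation $\mathcal{F}_M$ on $M$ containing $S$ as a leaf, with $\partial M$ tangent. Glue product foliations on collars of $F_\pm$ (smoothed as in Remark~\ref{rem:smoothness}) to recover $Y$ and produce a taut foliation $\mathcal{F}_Y$ with $F$ as a compact leaf. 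By Eliashberg--Thurston this foliation is $C^0$-approximated by weakly fillable contact structures $\xi$ on $Y$ with $\xi|_F$ arbitrarily close to $TF$, and Corollary~\ref{cor:nonvanishing special}, applicable since $b_1(Y)=1$ by Proposition~\ref{prop:homological main thm} and $\omega([F])\neq 0$, yields $c^+(\xi;\Lambda_\omega)\neq 0$ in $\underline{HF}^+(-Y;\Lambda_\omega)$.

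The assumption $\chi_-(S)>0$ permits a second sutured manifold hierarchy on $M$, producing a second taut foliation $\mathcal{F}'_Y$ whose associated contact structure $\xi'$ has canonical Spin$^c$ structure distinct from that of $\xi$; the distinction is detected by evaluating $c_1$ on an interior surface produced by the hierarchy. Since the hypothesis $\underline{HF}^+(Y;\Lambda_\omega)\cong\Lambda$ forces the analogous $\underline{HF}^+(-Y;\Lambda_\omega)$ to concentrate in a single Spin$^c$ summand, at most one of $c^+(\xi;\Lambda_\omega)$ and $c^+(\xi';\Lambda_\omega)$ can be nonzero, contradicting the nonvanishing statement of the previous paragraph applied to both $\xi$ and $\xi'$.

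The hardest step will be verifying that the two foliations produce contact Spin$^c$ structures in genuinely different summands. In Ghiggini's $g>1$ argument the distinction comes from the adjunction inequality applied to a higher-genus interior surface, whereas for $g=1$ the adjunction on $F$ itself is vacuous, so the separating input must come from an interior surface guaranteed by $\chi_-(S)>0$. Ensuring that this interior surface really separates the two contact invariants in the $\omega$-twisted setting is where the main technical work will lie.
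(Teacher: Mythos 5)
Your high-level strategy — build two taut foliations, pass to weakly fillable contact structures, show the twisted contact invariants land in distinct Spin$^c$ summands, and derive a contradiction with $\underline{HF}^+(Y;\Lambda_\omega)\cong\Lambda$ — matches Ghiggini's blueprint and the paper's proof. However, there are two concrete gaps that you have not addressed, and the second one is exactly the technical crux you yourself flag as ``the hardest step.''

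First, the assertion that $b_1(Y)=1$ follows from Proposition~\ref{prop:homological main thm} is false. That proposition only gives that $M$ is a homology product; $b_1(Y)$ depends additionally on how $F_-$ is glued to $F_+$ and can be as large as $3$ (e.g.\ $T^3$). The paper handles this by regluing: it replaces $Y$ by a manifold $Y_\psi$ with $b_1(Y_\psi)=1$ and invokes Corollary~\ref{cor:reglue} to see that $\underline{HF}^+(Y_\psi;\Lambda_\omega)\cong\underline{HF}^+(Y;\Lambda_\omega)\cong\Lambda$, so the hypotheses are preserved. Without this reduction, Corollary~\ref{cor:nonvanishing special}, which you want to invoke, does not apply.

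Second, your proposal does not actually separate the two Spin$^c$ structures; it just asserts that they differ, ``detected by evaluating $c_1$ on an interior surface,'' and you concede you do not see why this works. The difficulty is genuine: the decomposing surface has boundary on $\partial M$, so there is no closed surface in $Y$ on which to evaluate $c_1$, and (as you note) adjunction on $F$ is vacuous in genus one. The paper's resolution has two ingredients you are missing. (i) It takes the two decomposing surfaces to lie in the classes $\mathcal S_m(+c)$ and $\mathcal S_n(-c)$ — i.e.\ with \emph{opposite} orientations of $c$ — so the foliation Euler numbers evaluated on a fixed $S$ have opposite signs: $e(\mathscr F_1,S)<0<e(\mathscr F_2,S)$. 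This sign argument, not the mere positivity of the norm, is what forces the inequality. (ii) It then forms $Y_\phi$ by cutting along $F$ and regluing by a diffeomorphism $\phi$ with $\phi(c_+)=c_-$, realized as a composition of negative Dehn surgeries on curves in $F$. In $Y_\phi$ the surface $S$ closes up to $\overline S$, and contact $(+1)$--surgery (Proposition~\ref{prop:contact surgery}) together with Corollary~\ref{cor:reglue} transports the nonvanishing contact invariants to $\underline{HF}^+(-Y_\phi;\Lambda_\omega)$, where $\langle c_1(\xi_1'),[\overline S]\rangle\neq\langle c_1(\xi_2'),[\overline S]\rangle$ gives linear independence. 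Your proposal, by staying on $Y$ and hoping for an ``interior'' detecting surface, does not reach this conclusion; it is precisely the passage to $Y_\phi$ that supplies the closed surface needed to distinguish the Spin$^c$ structures.
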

\begin{proof}
First notice that $M$ is a homological product by
Proposition~\ref{prop:homological main thm}. We can glue the two
boundary components of $M$ by a homeomorphism $\psi$ to get a new
manifold $Y_{\psi}$ with $b_1(Y_{\psi})=1$. Let $\gamma \subset
Y-K$ be a closed curve which is Poincar\'e dual to $\omega$. Then
$\gamma$ also lies in $Y_{\psi}$, we still denote its Poincar\'e
dual in $Y_{\psi}$ by $\omega$, and we have $\omega([F]) \neq 0$
in the new manifold $Y_{\psi}$. By Corollary~\ref{cor:reglue},
\[
\underline{HF}^+(Y_{\psi};\Lambda_{\omega}) \cong
\underline{HF}^+(Y;\Lambda_{\omega}) \cong
\Lambda.
\]
$Y_{\psi}$ satisfies all the conditions in Theorem~\ref{main
theorem}, so we can work with $Y_{\psi}$ instead of $Y$.

From now on we assume $b_1(Y)=1$. We also assume that
$\omega([F])>0$, otherwise we can change the orientation of $F$.

If the conclusion of the lemma does not hold, suppose $c=c_+
\subset F_+$ is an essential simple closed curve such that there
does not exist an annulus $A$ as in the statement of the lemma.
Since $M$ is a homology product, we can find a simple closed curve
$c_- \subset F_-$ homologous to $c_+$ in $M$. We fix an arc
$\delta \subset M$ connecting $F_-$ to $F_+$. Let
$\mathcal{S}_m(+c)$ be the set of properly embedded surfaces $S
\subset M$ such that $\partial S=(-c_-) \cup c_+$ and the
algebraic intersection number of $S$ with $\delta$ is $m$.
$\mathcal{S}_m(+c)\ne\emptyset$ since $M$ is a homology product.

For any surface $S\in\mathcal S_m$, its norm $x(S) >0$. Otherwise
one component of $S$ must be an annulus $A$ connecting $c_-$ to
$c_+$, which contradicts our assumption.

By \cite[Lemma~6.4]{Ni1}, when $m$ is sufficiently large, there is
a connected surface $S_1\in\mathcal S_m(+c)$ which gives a taut
decomposition of $M$. If we reverse the orientation of $c$, when
$n$ is sufficiently large, as before there is $S_2\in\mathcal
S_n(-c)$ which gives a taut decomposition of $M$. As in \cite{Ga},
using these two decompositions, one can then construct two taut
foliations $\mathscr G_1,\mathscr G_2$ of $M$, such that $F_-,F_+$
are leaves of them. These two foliations are glued to get two taut
foliations $\mathscr F_1,\mathscr F_2$ of $Y$, such that $F$ is a
leaf of them. Suppose $S$ is a surface in $\mathcal S_0(+c)$, then
$-S\in\mathcal S_0(-c)$. We have
\begin{eqnarray*}
e(\mathscr F_1,S)&=&e(\mathscr F_1,S_1-mF)=\chi(S_1)-m\cdot0<0,\\
e(\mathscr F_2,-S)&=&e(\mathscr F_2,S_2-nF)=\chi(S_2)-n\cdot0<0,
\end{eqnarray*}
where $e(\mathscr F,S)$ is defined in \cite[Definition~3.7]{Gh}.
Thus we conclude that
\begin{equation}\label{Eq:DiffEuler}e(\mathscr F_1,S)\ne
e(\mathscr F_2,S).
\end{equation}

Choose a diffeomorphism  $\phi\co F_+ \to F_-$ such that
$\phi(c_+)=c_-$. Let $Y_{\phi}$ be the $3$--manifold obtained from
$M$ by  gluing $F_+$ to $F_-$ by $\phi$. Decompose $\phi$ as a
product of positive Dehn twists along non-separating curves
$\{c_1,\dots,c_k \}$ on $F$, then $Y_{\phi}$ is obtained from $Y$
by doing ($-1$)--surgeries along these curves. Let $-W$ be the
cobordism obtained by adding $2$--handles to $Y \times I$ along
these curves with $-1$ framing. As in the beginning of this proof,
$\omega$ also denotes an element in $H^2(Y_{\phi};\mathbb Z)$.

 Since
$\omega([F]) \neq 0$, by Corollary~\ref{cor:reglue}, the map
\[
\underline{F}^+_{-W;\mathrm{PD}(\gamma \times I)}\co
\underline{HF}^+(Y;\Lambda_{\omega}) \to
\underline{HF}^+(Y_{\phi};\Lambda_{\omega})
\]
induced by the cobordism $W$ is an isomorphism.

By Remark~\ref{rem:smoothness}, one can approximate the foliations
$\mathscr{F}_1,\mathscr{F}_2$ on $-Y$ by smooth weakly fillable
contact structures $\xi_1,\xi_2$ on $-Y$. We can realize the above
curves $\{c_1,\dots,c_k \}$ to be Legendrian knots in both $\xi_1$
and $\xi_2$. Let $\xi_i'$ be the contact structure on $-Y_{\phi}$
obtained from $(-Y,\xi_i)$ by doing $(+1)$--contact surgeries
along these Legendrian knots. By Proposition~\ref{prop:contact
surgery}
\[
\underline{F}^+_{-W;\mathrm{PD}(\gamma \times
I)}(c^+(-Y,\xi_i;\Lambda_{\omega}))=c^+(-Y_{\phi},\xi_i';\Lambda_{\omega})
\]
for $i=1,2$. The hypothesis $b_1(Y)=1$, $\omega([F])> 0$, the fact
that $\xi_i$ is weakly fillable and
Corollary~\ref{cor:nonvanishing special} force
$c^+(-Y,\xi_i;\Lambda_{\omega})\ne 0$ for $i=1,2$. Since the map
$\underline{F}^+_{-W;\mathrm{PD}(\gamma \times I)}$ is an
isomorphism as stated above,
\[
c^+(-Y_{\phi},\xi_i';\Lambda_{\omega})=\underline{F}^+_{-W;\mathrm{PD}(\gamma
\times I)}(c^+(-Y,\xi_i;\Lambda_{\omega})) \neq 0
\]
for $i=1,2$.

Let $\overline S$ be the closed surface in $Y_{\phi}$ obtained by
gluing the two boundary components of $S$ together. As in
\cite[Lemma 3.10]{Gh}, (\ref{Eq:DiffEuler}) implies that
\[
\langle c_1(\xi_1'),[\overline{S}] \rangle \neq \langle
c_1(\xi_2'),[\overline{S}] \rangle,
\]
so the Spin$^c$ structures $\mathfrak{s}_{\xi_1'}$ and
$\mathfrak{s}_{\xi_2'}$ are different. Therefore the two elements
$$c^+(-Y_{\phi},\xi_1';\Lambda_{\omega}) \in
\underline{HF}^+(-Y_{\phi},\mathfrak{s}_{\xi_1'};\Lambda_{\omega})$$
and
$$c^+(-Y_{\phi},\xi_2';\Lambda_{\omega})  \in
\underline{HF}^+(-Y_{\phi},\mathfrak{s}_{\xi_2'};\Lambda_{\omega})$$
are linearly independent. Hence $c^+(-Y,\xi_1;\Lambda_{\omega})$
and $c^+(-Y,\xi_2;\Lambda_{\omega})$ are also linearly
independent. We hence get a contradiction to the assumption that
$\underline{HF}^+(Y;\Lambda_{\omega})=\Lambda$.
\end{proof}

\begin{proof}[Proof of Theorem~\ref{main theorem}]
Cut $Y$ open along $F$, we get a compact manifold $M$. By
Lemma~\ref{lem:annulus}, we can find two embedded annuli
$A_1,A_2\subset M$, each has one boundary component on $F_-$ and
the other boundary component on $F_+$, and $\partial
A_1\cap\partial A_2$ consists of two points lying in $F_-$ and
$F_+$, respectively. A further isotopy will ensure that $A_1\cap
A_2$ consists of exactly one arc, so a regular neighborhood of
$A_1\cup A_2$ in $M$ is homeomorphic to $(T^2-D^2)\times I$.  Now
the irreducibility of $Y$ implies that $M=T^2 \times I$. This
finishes the proof.
\end{proof}

\section{Proof of Theorem \ref{fiber knot}}

We turn our attention to the proof of Theorem \ref{fiber knot}. We
will use the following lemmas, which are very similar in spirit.

\begin{lem}
\label{tensor laurant serie} Let $A,B$ be $\mathbb{Z}$--graded
abelian groups such that $A_{(i)}$ and $B_{(i)}$ are finitely
generated for each $i \in \mathbb{Z}$. Let $h,v\co A \to B$ be
homogeneous homomorphisms of the same degree $d$. Suppose $h$ has
a right inverse $\iota$, i.e. $h \circ \iota=\mathrm{Id}_B$.
(Hence the degree of $\iota$ is $-d$.) Then the map
$$h+tv\co A \otimes \Lambda \to B \otimes \Lambda$$
is  surjective and $\mathrm{ker}(h+tv) \cong \mathrm{ker}(h)
\otimes \Lambda$ .
\end{lem}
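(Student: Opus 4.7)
My plan is to use the right inverse $\iota$ to split $A$, and then reduce the statement to inverting a single operator on $B\otimes\Lambda$. Since $h\circ\iota=\mathrm{id}_B$, the endomorphism $\iota h\colon A\to A$ is a graded projector, which produces a direct sum decomposition of graded abelian groups
\[
A=\ker(h)\oplus\iota(B), \qquad a=\bigl(a-\iota h(a)\bigr)+\iota h(a).
\]
Tensoring with $\Lambda$ over $\mathbb{Z}$ yields $A\otimes\Lambda=(\ker(h)\otimes\Lambda)\oplus(\iota(B)\otimes\Lambda)$, and I will write a general element as $k+\iota(b')$ with $k\in\ker(h)\otimes\Lambda$ and $b'\in B\otimes\Lambda$.

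Next I introduce the degree-$0$ graded endomorphism $\psi:=v\iota\colon B\to B$. Using $h\iota=\mathrm{id}_B$ and $h(k)=0$, a direct calculation yields
\[
(h+tv)(k+\iota(b'))=(1+t\psi)(b')+tv(k).
\]
The key assertion is that $1+t\psi$ is an automorphism of $B\otimes\Lambda$ with inverse given by the geometric series $\sum_{n\ge 0}(-t\psi)^n$. This is where the finite-generation hypothesis enters. Since $\psi$ has degree $0$ it preserves each $B_{(i)}$, so for a homogeneous $b\in B_{(i)}\otimes\Lambda$ all of the iterates $\psi^n(b)$ remain in the single finitely generated group $B_{(i)}$, while the accompanying $t$-exponents are bounded below (shifted upward by $n\ge 0$ from the original support of $b$). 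The formal series therefore defines an honest element of $B_{(i)}\otimes\Lambda$, and the identity $(1+t\psi)\sum_n(-t\psi)^n=\mathrm{id}$ is verified by direct expansion on each graded piece.

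Given the invertibility of $1+t\psi$, both conclusions fall out immediately. For surjectivity, given $b\in B\otimes\Lambda$ I set $a:=\iota\bigl((1+t\psi)^{-1}b\bigr)$ and check that $(h+tv)(a)=b$. For the kernel, the equation $(1+t\psi)(b')+tv(k)=0$ determines $b'$ uniquely from $k$ as $b'=-(1+t\psi)^{-1}\bigl(tv(k)\bigr)$, so the assignment
\[
k\;\longmapsto\;k-\iota\bigl((1+t\psi)^{-1}(tv(k))\bigr)
\]
is a $\Lambda$-linear isomorphism $\ker(h)\otimes\Lambda\to\ker(h+tv)$, with inverse given by projection onto the $\ker(h)\otimes\Lambda$ summand. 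The only mildly delicate point I anticipate is the convergence of the Novikov series defining $(1+t\psi)^{-1}$, but the finite-generation hypothesis on each $B_{(i)}$ is tailor-made for this: one only ever sums infinitely many $t$-powers, which is exactly what $\Lambda$ is designed to accommodate, never infinitely many elements within a single graded component.
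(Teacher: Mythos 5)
Your proof is correct and is essentially the same argument as the paper's: the paper writes the surjectivity right-inverse as $P=\sum_{j\ge 0}\iota(v\iota)^j(-t)^j=\iota(1+t\psi)^{-1}$ and the kernel isomorphism as $F(a)=\sum_{i\ge 0}(-t\iota v)^i(a)$ with inverse $G(b)=(1+t\iota v)b$, which unwind to precisely your maps $b\mapsto\iota((1+t\psi)^{-1}b)$ and $k\mapsto k-\iota((1+t\psi)^{-1}(tv(k)))$ with inverse the projection onto $\ker(h)\otimes\Lambda$. Your packaging via the splitting $A=\ker(h)\oplus\iota(B)$ and the Neumann-series inversion of $1+t\psi$ is a cleaner way to organize and motivate exactly the same formulas.
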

\begin{proof}
Define a map
 $$P=\sum_{j=0}^{\infty} \iota \circ (v\iota)^{j}(-t)^j.$$
It is well-defined by the fact that the composition $v\iota$ is of
degree 0 and the assumption that
 $B_{(i)}$ is finitely generated for each $i \in \mathbb{Z}$. Clearly
 $$(h+tv) \circ P=\mathrm{Id}_{B}.$$
So $h+tv$ is surjective. Define a map $F\co \mathrm{ker}(h)
\otimes \Lambda \to \mathrm{ker}(h+tv)$ by
$$F(a)=\sum_{i=0}^{\infty}(-t \iota \circ v)^i(a).$$
It has a two sided inverse $G\co \mathrm{ker}(h+tv) \to
\mathrm{ker}(h) \otimes \Lambda$ defined by
$$G(b)=(1+t \iota \circ v)b.$$
$F$ and $G$ are also well-defined. They define an isomorphism
between $\mathrm{ker}(h+tv)$  and $\mathrm{ker}(h) \otimes
\Lambda$.
\end{proof}

\begin{lem}
\label{neglect lower terms} Let $A,B$ be $\mathbb{Z}$--graded
abelian groups as in the above lemma such that  their gradings are
bounded from below. Suppose  $f_1\co A \to B$ and $f_2\co A \to B$
are homomorphisms such that $f_1$ is homogeneous, and for every
homogeneous element $a \in A$ each term in $f_2(a)$ has grading strictly lower
than $f_1(a)$. If $f_1$ is surjective, then $f_1+f_2$ is also
surjective. Furthermore, $\mathrm{ker}(f_1+f_2) \cong
\mathrm{ker}(f_1)$.
\end{lem}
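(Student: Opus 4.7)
The plan is to mimic the proof of Lemma~\ref{tensor laurant serie} almost verbatim, with the formal variable $t$ replaced by the action of $f_2$ itself: the hypothesis that $f_2$ strictly drops grading, combined with the gradings being bounded from below, makes any formal series built out of iterations of $f_2$ terminate on each homogeneous input, playing exactly the role that the Novikov parameter plays in the previous lemma.

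First I would choose a graded right-inverse $\iota\co B\to A$ of $f_1$, assembled from splittings of the surjections $f_1|_{A_{(i)}}\co A_{(i)}\to B_{(i+d)}$ (where $d=\deg f_1$) one grading at a time. Then set
\[
P=\sum_{j=0}^{\infty}\iota\circ(-f_2\iota)^j,\qquad F(a)=\sum_{j=0}^{\infty}(-\iota f_2)^j(a),\qquad G(b)=(1+\iota f_2)(b),
\]
in direct analogy with the formulas in Lemma~\ref{tensor laurant serie}. Both $f_2\iota\co B\to B$ and $\iota f_2\co A\to A$ strictly lower grading, so on an element supported in gradings $\le n$, the $j$-th iterate is supported in gradings $\le n-j$, which vanishes once $n-j$ falls below the common lower bound on gradings of $A$ and $B$. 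Thus the sums truncate on every homogeneous input and $P,F,G$ are honest homomorphisms. The rest is formal bookkeeping identical to the previous lemma: one verifies $(f_1+f_2)\circ P=\mathrm{Id}_B$ to get surjectivity, checks that $F$ lands in $\ker(f_1+f_2)$ and $G$ lands in $\ker(f_1)$, and computes that $F\circ G$ and $G\circ F$ telescope to the identity, yielding the kernel isomorphism.

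The one real obstacle I foresee is producing $\iota$ as a homomorphism: a surjection of abelian groups does not admit a homomorphism section in general when the target has torsion. In the intended application within the proof of Theorem~\ref{fiber knot} this is harmless, since the objects in play are vector spaces over the field $\Lambda$ and $f_1$ is $\Lambda$-linear, so a graded $\Lambda$-linear section exists automatically. In the purely abelian-group formulation of the statement one would instead have to split the short exact sequence $0\to\ker(f_1)\cap A_{(i)}\to A_{(i)}\to B_{(i+d)}\to 0$ grading-by-grading, exploiting that only finitely generated groups appear in each grading; this part does not follow from a direct translation of Lemma~\ref{tensor laurant serie} alone and is the place where some care is required.
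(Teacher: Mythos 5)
The paper gives no proof of its own here: it simply cites \cite[Proposition~5.8]{OSzAnn2}. Your strategy of transplanting the geometric-series argument from Lemma~\ref{tensor laurant serie}, with $f_2$ playing the role of $t v$ and the bounded-below grading guaranteeing termination, is the natural one and the computations with $P$, $F$, $G$ do go through once a homogeneous section $\iota$ of $f_1$ exists. You are also right that this is exactly the situation in the application: there $A$ and $B$ are $\Lambda$--vector spaces and $f_1$ is $\Lambda$--linear, so a graded section exists and the lemma holds with no caveat. It is worth noting that surjectivity alone does not even need $\iota$: one can build a preimage of any $b$ by a finite descending induction on grading, choosing an arbitrary preimage of the top component under $f_1$ at each step, with termination again forced by the lower bound on gradings.

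The one place where your write-up goes wrong is the suggested fix for the pure abelian-group case. You propose to split the sequence $0\to\ker(f_1)\cap A_{(i)}\to A_{(i)}\to B_{(i+d)}\to 0$ ``exploiting that only finitely generated groups appear in each grading,'' but finite generation is not enough: a surjection of finitely generated abelian groups need not split (e.g.\ $\mathbb{Z}\twoheadrightarrow\mathbb{Z}/2$). Indeed the kernel statement in the lemma can genuinely fail for graded abelian groups: take $A=\mathbb{Z}_{(0)}\oplus(\mathbb{Z}/2)_{(1)}$, $B=(\mathbb{Z}/2)_{(0)}$, $f_1$ the degree-$0$ map which is reduction mod~$2$ on $A_{(0)}$ and zero on $A_{(1)}$, and $f_2$ the degree-$(-1)$ map which is the identity $A_{(1)}=\mathbb{Z}/2\to B_{(0)}=\mathbb{Z}/2$; then $f_1$ and $f_1+f_2$ are both surjective, but $\ker f_1\cong\mathbb{Z}\oplus\mathbb{Z}/2$ while $\ker(f_1+f_2)\cong\mathbb{Z}$. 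So the lemma needs either a freeness/field hypothesis or to be read as the filtered statement of \cite[Proposition~5.8]{OSzAnn2}; your argument is fine in the setting where it is actually used, but the proposed grading-by-grading splitting does not rescue the literal abelian-group formulation.
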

\begin{proof}
See \cite[Proposition~5.8]{OSzAnn2}.
\end{proof}

\begin{proof}[Proof of Theorem \ref{fiber knot}]
If the zero surgery $Y_0(K)$ is a torus bundle, take
$\omega=\mathrm{PD}(\mu)  \in H^2(Y_0(K);\mathbb{Z})$ be the
Poincar\'e dual of the meridian of $K$. By Theorem~\ref{thm:hf for
torus bundle},
$$\underline{HF}^+(Y_0(K);\Lambda_{\omega}) \cong \Lambda,$$
and is
supported in a single torsion Spin$^c$ structure $\mathfrak{s}_0$.
For every $m >0$, $\mathfrak{s}_0$ induces unique Spin$^c$
structures on $Y$ and $Y_m(K)$, respectively. We
 denote these Spin$^c$ structures by $\mathfrak{s},\mathfrak s_m$.
According to Theorem \ref{thm:integral surgery} we have the
following exact sequence:
\begin{equation}\label{Seq:0m}
\begin{xymatrix}{
\underline{HF}^+(Y_0(K),\mathfrak{s}_0;\Lambda_{\omega})
\hspace{10pt}\ar[r]& &\hspace{-60pt}HF^+(Y_m(K),\mathfrak{s}_m) \otimes \Lambda\ar[dl]_{F^+}\\
&HF^+(Y,\mathfrak{s}) \otimes \Lambda\:,\ar[ul]&}
\end{xymatrix}
\end{equation}
where the map $F^+$ is induced by the cobordism $W'_m\co Y_m(K)
\to Y$,
\[
F^+=\sum_{\{\mathfrak{t} \in \mathrm{Spin}^c(W'_m),
\mathfrak{t}|_{Y}=\mathfrak{s},\mathfrak{t}|_{Y_m}=\mathfrak{s}_m\}}\underline{F}^+_{W'_m(K),\mathfrak{t};\mathrm{PD}(\mu
\times I)}.
\]
Note $W'_m$ is a cobordism between two rational homology
$3$--spheres, as in \cite[Section~3.1]{OSzGenus}, the  above maps
are related to the untwisted case by the formula
\begin{equation}\label{Eq:F+}
F^+=\pm t^c \cdot \sum_{\{\mathfrak{t} \in \mathrm{Spin}^c(W'_m),
\mathfrak{t}|_{Y}=\mathfrak{s},\mathfrak{t}|_{Y_m}=\mathfrak{s}_m\}}F^+_{W'_m(K),\mathfrak{t}}
\cdot t^{\frac{\langle c_1(\mathfrak{t}) \cup \mathrm{PD}[\mu
\times I],[W'] \rangle}{2}}.
\end{equation}
$F^+$ has two distinguished summands
$\underline{F}^+_{W'_m,\mathfrak{x}_k;\mathrm{PD}(\mu \times I)}$ and
$\underline{F}^+_{W'_m,\mathfrak{y}_k;\mathrm{PD}(\mu \times I)}$. A simple
calculation shows they are homogeneous maps of the same degree,
and their degree is strictly larger than the degree of any other
summand of $F^+$ \cite[Lemma~4.4]{OSzIntSurg}.

Take $m$ to be sufficiently large, by Theorem~\ref{large surgery}, the exact triangle
 (\ref{Seq:0m}) can be identified with
\begin{equation}\label{Seq:F+}
\hspace{-10pt}\begin{xymatrix}{
\hspace{10pt}\underline{HF}^+(Y_0(K),\mathfrak{s}_0;\Lambda_{\omega})
\hspace{5pt}\ar[r]& &\hspace{-65pt}
H_{\ast}(C(\mathfrak{s}){\{\text{max}(i,j) \geq 0\}})
\otimes \Lambda \ar[dl]_{F^+}\\
&\hspace{-10pt} H_{\ast}(C(\mathfrak{s}){\{i \geq 0\}}) \otimes
\Lambda \ar[ul]&}
\end{xymatrix}
\end{equation}
Under this identification, using equation (\ref{Eq:F+}), $F^+$ can
be written as:
$$F^+=h_{\ast} +tv_{\ast}+ \text{lower degree summands}. $$

On the other hand there is a short exact sequence (see
\cite[Corollary 4.5]{OSzKnot}):
$$0 \to C(\mathfrak{s}){\{i \geq 0 \text{ and } j<0\}} \to C(\mathfrak{s}){\{i \geq 0 \text{ or } j \geq 0\}}
\stackrel{p}{\longrightarrow} C(\mathfrak{s}){\{j \geq 0\}} \to
0.$$ It induces a long exact sequence. Since $C(\mathfrak{s}){\{j
\geq 0\}}$ and $C(\mathfrak{s}){\{i \geq 0\}}$ are chain homotopy
equivalent, we get:
\begin{equation}\label{Seq:h*}
\hspace{-5pt}\begin{xymatrix}{ \widehat{HFK}(Y,K,\mathfrak{s},-1)
\ar[rr]& & H_{\ast}(C(\mathfrak{s}){\{\text{max}(i,j) \geq 0\}})
\ar[dl]_{h_{\ast}}\\
&\hspace{-2pt} H_{\ast}(C(\mathfrak{s}){\{i \geq 0\}}).\ar[ul] &}
\end{xymatrix}
\end{equation}
By assumption $Y$ is an $L$--space, so $
H_{\ast}(C(\mathfrak{s}){\{i \geq 0\}}) \cong
\mathbb{Z}[U,U^{-1}]/{U\mathbb{Z}[U]}$. It follows that $h_*$ is
surjective since it is $U$--equivariant and is an isomorphism at
sufficiently large gradings. Moreover, $h_*$ has a right inverse
since its image is a free abelian group.

Using Lemma \ref{tensor laurant serie} and \ref{neglect lower
terms}, $h_*+tv_*$ and $F^+$ are surjective, and
$$\mathrm{ker}(F^+) \cong\mathrm{ker}(h_*+tv_*)\cong \mathrm{ker}(h_{\ast}) \otimes \Lambda.$$
From exact sequences (\ref{Seq:F+}),(\ref{Seq:h*})
$$\underline{HF}^+(Y_0(K),\mathfrak{s};\Lambda_{\omega})\cong\mathrm{ker}(F^+),\quad
\widehat{HFK}(Y,K,\mathfrak{s},-1)\cong\mathrm{ker}(h_*).$$ Hence
$\widehat{HFK}(Y,K,\mathfrak{s},-1)$ has rank $1$. The same
argument shows that for any other Spin$^c$ structure
$\mathfrak{s}'$ on $Y$, $\widehat{HFK}(Y,K,\mathfrak{s}',-1) \cong
0$. So
$$\widehat{HFK}(Y,K,[F],-1)=\oplus_{\mathfrak{r} \in \mathrm{Spin}^c(Y)}\widehat{HFK}(Y,K,\mathfrak{r},-1)$$
is of rank one, hence  $K$ is a fibered knot by \cite{Ni1}.
\end{proof}

\end{document}